\newcommand\RE{\mathbb{R}}
\newcommand\CO{\mathbb{C}}
\newcommand\sfzero{\mathsf{0}}
\newcommand\sfA{\mathsf{A}}
\newcommand\sfB{\mathsf{B}}
\newcommand\sfC{\mathsf{C}}
\newcommand\sfD{\mathsf{D}}
\newcommand\sfE{\mathsf{E}}
\newcommand\sfF{\mathsf{F}}
\newcommand\sfM{\mathsf{M}}
\newcommand\sfx{\mathsf{x}}
\newcommand\sfy{\mathsf{y}}
\newcommand\sfz{\mathsf{z}}
\newcommand\Tf{T_{F1}}
\newcommand\Tfh{T_{F1,h}}
\newcommand\Tl{T_{LL*}}
\newcommand\Tlh{T_{LL*,h}}
\newcommand\bfsigma{\boldsymbol{\sigma}}
\newcommand\bftau{\boldsymbol{\tau}}
\newcommand\bfchi{\boldsymbol{\chi}}
\newcommand\bfxi{\boldsymbol{\xi}}
\newcommand\bfH{\mathbf{H}}
\newcommand\bft{\mathbf{t}}
\newcommand\bfn{\mathbf{n}}
\newcommand\grad{\operatorname\nabla}
\renewcommand\div{\operatorname{\mathrm{div}}}
\newcommand\curl{\operatorname{\mathbf{curl}}}
\newcommand\Hdiv{\mathbf{H}(\div;\Omega)}
\newcommand\Hcurl{\mathbf{H}(\curl;\Omega)}
\newcommand\Hocurl{\mathbf{H}_0(\curl;\Omega)}
\newcommand\llstar{LL$^*$\xspace}
\newcommand\Span{\operatorname{\mathrm{span}}}
\newcommand\rank{\operatorname{\mathrm{rank}}}
\newtheorem{theorem}{Theorem}
\newtheorem{lemma}[theorem]{Lemma}
\newtheorem{proposition}[theorem]{Proposition}
\theoremstyle{remark}
\newtheorem{remark}{Remark}
\begin{document}

\title{First order least-squares formulations for eigenvalue problems}

\author{Fleurianne Bertrand}
\address{Humboldt-Universit\"at zu Berlin, Germany and King Abdullah
University of Science and Technology, Saudi Arabia}
\curraddr{}
\email{}
\thanks{}

\author{Daniele Boffi}
\address{King Abdullah University of Science and Technology, Saudi Arabia, and
University of Pavia, Italy}
\curraddr{}
\email{}
\thanks{}

\date{}

\begin{abstract}

In this paper we discuss spectral properties of operators associated with the
least-squares finite element approximation of elliptic partial differential
equations. The convergence of the discrete eigenvalues and eigenfunctions
towards the corresponding continuous eigenmodes is studied and analyzed with
the help of appropriate $L^2$ error estimates. A priori and a posteriori
estimates are proved.

\end{abstract}

\maketitle

\section{Introduction}

Least-squares finite element formulations have been successfully used for the
approximation of several problems described in terms of partial differential
equations.

In particular, we are considering formulations that approximate
simultaneously scalar (potential) and vector (flux) variables in the spirit of
first-order system least squares~\cite{bochev}. While least-squares schemes
posses an inherent error control and are particularly suited for problems
involving coupling conditions, other approaches involving mixed or hybrid
schemes~\cite{bbf} enjoy good conservation properties. The closedness property
from~\cite{brandts} show how sometimes results from one approach can be
transferred to the other one.

Only few papers deal with eigenvalue problems associated with least-squares
formulations. In~\cite{BrambleOsborn} the authors apply their theory to a
second order least-squares formulation of a Dirichlet eigenvalue problem.
In~\cite{pasciak} a first order least-squares formulation is introduced for
the approximation of the eigenvalues of Maxwell's equations.

In this paper we aim at investigating the least-squares finite element
approximation of the eigensolutions of operators associated with second order
elliptic equations. Even if the proposed method may be not competitive with
other solution techniques, the presented analysis sheds some light on
fundamental properties of least-squares formulations, in particular in
connection with the simulation of evolution problems.

We start with presenting several least-squares formulations for the
approximation of the eigensolutions of the Diriclet Laplace problem. For each
formulation we characterize the eigenmodes obtained after finite element
discretization and we describe the structure of the underlined algebraic
systems.

We then discuss the convergence of the discrete solutions towards the
continuous eigenmodes. We use the standard theory of the approximation of
compact operators (see~\cite{BaOs,acta} and the references therein); it can be
easily seen that standard energy estimates (in the graph norm) are not enough
to guarantee the uniform convergence of the discrete solution operator
sequence to the continuous solution operator. This is a consequence of the
known lack of compactness of the solution operator in the energy norm; for
this reason, we consider the solution operator in $L^2(\Omega)$ and we discuss
various $L^2(\Omega)$ error estimates. It turns out that in the case of $\div$
formulations for FOSLS (First order system least-squares) and \llstar
formulations, if the flux variable is approximated with Raviart--Thomas spaces
(or, in general, with other mixed spaces~\cite{bbf}) then the presented
approximations are optimally convergent. On the other hand, the corresponding
$\div$--$\curl$ formulations suffer, as expected, from serious issues when
applied to singular solutions such as those occurring when the computational
domain presents reentrant corners; in this case continuous finite elements
cannot correctly approximate the flux which is not $\bfH^1(\Omega)$ regular
and the corresponding eigenvalues converge to a wrong solution.

A priori and a posteriori error estimates are presented and rigorously proved
for the proposed formulations.

Several numerical tests conclude the paper, confirming our results and
investigating situations not covered by the theory.

\section{The Laplace eigenvalue problem}

The problem we are considering is to find $\lambda\in\RE$ and $u$ non
vanishing such that
\[
\left\{
\aligned
&-\Delta u=\lambda u&&\text{in }\Omega\\
&u=0&&\text{on }\partial\Omega
\endaligned
\right.
\]
Our problem can be written in the following standard first order formulation:
find $\lambda\in\RE$ and $u$ non vanishing such that for some $\bfsigma$
\[
\left\{
\aligned
&\bfsigma-\grad u=0&&\text{in }\Omega\\
&\div\bfsigma=-\lambda u&&\text{in }\Omega\\
&u=0&&\text{on }\partial\Omega
\endaligned
\right.
\]

More general symmetric elliptic problems in divergence form could be
considered, as well as different homogeneous boundary conditions. Since all
our analysis applies with standard modifications to more general situations,
we describe our theory in the simplest possible setting.

\subsection{FOSLS formulation}

The simplest least squares formulation for the source problem is given by the
minimization of the following functional~\cite{fosls}:
\[
\mathcal{F}(\bftau,v)=\|\bftau-\grad v\|^2+\|\div\bftau+f\|^2
\]
If the $L^2(\Omega)$ norm is considered, this leads to the following
variational formulation: find $\bfsigma\in\Hdiv$ and $u\in H^1_0(\Omega)$ such
that
\begin{equation}
\left\{
\aligned
&(\bfsigma,\bftau)+(\div\bfsigma,\div\bftau)-(\grad
u,\bftau)=-(f,\div\bftau)&&\forall\bftau\in\Hdiv\\
&-(\bfsigma,\grad v)+(\grad u,\grad v)=0&&\forall v\in H^1_0(\Omega)
\endaligned
\right.
\label{eq:F1source}
\end{equation}
This formulation can be used in a naturally way to consider the following
eigenvalue problem: find $\lambda\in\CO$ and $u\in H^1_0(\Omega)$ with $u\ne0$
such that for some $\bfsigma\in\Hdiv$ it holds
\begin{equation}
\left\{
\aligned
&(\bfsigma,\bftau)+(\div\bfsigma,\div\bftau)-(\grad
u,\bftau)=-\lambda(u,\div\bftau)&&\forall\bftau\in\Hdiv\\
&-(\bfsigma,\grad v)+(\grad u,\grad v)=0&&\forall v\in H^1_0(\Omega)
\endaligned
\right.
\label{eq:F1}
\tag{F1}
\end{equation}

Even if the formulation is not symmetric, it can be easily shown that the
eigenvalues are real. We state this result in the next proposition since its
proof might have interesting consequences for the numerical approximation of
our problem.

\begin{proposition}

Problem~\eqref{eq:F1} admits a sequence of positive eigenvalues
\[
0<\lambda_1<\lambda_2\le\lambda_3\le\dots
\]
diverging to $+\infty$. The corresponding eigenspaces span the space
$H^1_0(\Omega)$.

\end{proposition}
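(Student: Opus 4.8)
The plan is to reduce the eigenvalue problem \eqref{eq:F1} to a standard self-adjoint eigenvalue problem for the Laplacian. My starting observation is that the second equation of \eqref{eq:F1}, taking $v=u$ and using $v\in H^1_0(\Omega)$, couples $\bfsigma$ and $\grad u$; in fact, since the second equation reads $(\bfsigma,\grad v)=(\grad u,\grad v)$ for all $v$, and $\grad v$ ranges over the gradients of $H^1_0$ functions, I would first try to show that $\bfsigma=\grad u$ is forced on the relevant subspace. The subtlety is that $\bfsigma\in\Hdiv$ is a priori more general than a gradient, so the second equation only pins down the $\grad H^1_0$-component of $\bfsigma$.

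First I would test the first equation with $\bftau=\grad w$ for $w\in H^1_0(\Omega)$. Since $\div\grad w$ need not live in $L^2$, this is not immediately legitimate, so instead I would work the other way: test the first equation with a general $\bftau\in\Hdiv$ and use the second equation with $v$ chosen suitably to eliminate terms. The cleaner route is to derive the strong form. Integrating by parts formally, the first equation gives $\bfsigma+\grad(\div\bfsigma)-\grad u=-\lambda\grad u$ together with a boundary condition $(\div\bfsigma+\lambda u)=0$ on $\partial\Omega$ coming from the boundary term of $(\div\bfsigma,\div\bftau)$; the second gives $\div\bfsigma=\div\grad u=\Delta u$ in the sense that $\bfsigma-\grad u$ is divergence-free and $L^2$-orthogonal to all gradients, hence zero. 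So I expect the system to collapse to $\bfsigma=\grad u$ and $-\Delta u=\lambda u$, which is exactly the Dirichlet Laplacian eigenproblem.

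Once that reduction is established, the conclusion follows from the classical spectral theory of the Dirichlet Laplacian: it is a positive self-adjoint operator with compact inverse on $L^2(\Omega)$, so it has a sequence of positive eigenvalues $0<\lambda_1<\lambda_2\le\lambda_3\le\cdots$ diverging to $+\infty$ (the strict inequality $\lambda_1<\lambda_2$ being the simplicity of the first eigenvalue), and its eigenfunctions form an orthogonal basis of $L^2(\Omega)$ whose span is dense; restricted to the problem, the $u$-components span $H^1_0(\Omega)$. I would also need to confirm the converse, namely that every Laplacian eigenpair $(\lambda,u)$ yields a solution of \eqref{eq:F1} by setting $\bfsigma=\grad u$, which is a direct substitution check: the first equation becomes $(\grad u,\bftau)+(\Delta u,\div\bftau)-(\grad u,\bftau)=(\div\grad u,\div\bftau)=-\lambda(u,\div\bftau)$ using $-\Delta u=\lambda u$, and the second holds trivially.

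The main obstacle I anticipate is the rigorous justification that the second equation forces $\bfsigma=\grad u$ rather than merely $\bfsigma-\grad u\perp\grad H^1_0(\Omega)$. A field in $\Hdiv$ that is $L^2$-orthogonal to all gradients of $H^1_0$ functions is divergence-free with vanishing normal trace, i.e. it lies in the orthogonal complement that Helmholtz decomposition assigns to $\grad H^1_0(\Omega)$; this complement is nontrivial on multiply connected or topologically nontrivial domains. I would therefore need to use the first equation as well to rule out such spurious components — testing with $\bftau$ in that complement and exploiting $\|\bftau\|^2+\|\div\bftau\|^2$ structure should force the extra component to vanish — and this interplay between the two equations, rather than either one alone, is where the real work lies. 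Establishing that the eigenvalues are consequently real and positive is then immediate from the self-adjointness of the reduced problem.
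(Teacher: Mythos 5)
Your proof is correct in substance but takes a genuinely different route from the paper's. You reduce the eigenproblem~\eqref{eq:F1} directly to the Dirichlet Laplacian: the second equation forces $\div(\bfsigma-\grad u)=0$ distributionally, and the divergence-free remainder $w=\bfsigma-\grad u$ is then killed by the first equation exactly as you anticipate --- note that $w\in\Hdiv$ precisely because $\div w=0$, so you may take $\bftau=w$, and since $\div\bftau=0$ the first equation collapses to $\|w\|_{L^2(\Omega)}^2=0$; after that, surjectivity of $\div:\Hdiv\to L^2(\Omega)$ turns the remaining identity $(\Delta u+\lambda u,\div\bftau)=0$ for all $\bftau$ into $-\Delta u=\lambda u$, and your substitution check gives the converse, so the classical spectral theory of the Dirichlet Laplacian finishes the argument. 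The paper instead asserts this equivalence in one line --- the solution operator of~\eqref{eq:F1source} coincides with that of the standard Laplace problem --- and devotes its actual proof to something your argument does not produce: the algebraic symmetrization of~\eqref{eq:F1matrix}. Using $\sfD=-\sfB^\top$ (integration by parts plus the boundary condition) it derives the symmetric Schur complement $\sfA\sfx=(\lambda+1)\sfB^\top\sfC^{-1}\sfB\sfx$ and an equivalent symmetric generalized eigenvalue problem, which is the structural byproduct the paper exploits later at the discrete level (characterization of the discrete spectra, double order of convergence for the eigenvalues). In short: your route makes rigorous what the paper leaves implicit, while the paper's route buys the symmetric reformulation relevant for the discretization.

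One slip in your Helmholtz discussion should be fixed. The $L^2$-orthogonal complement of $\grad H^1_0(\Omega)$ consists of \emph{all} distributionally divergence-free fields, with no condition on the normal trace (a vanishing normal trace would arise from orthogonality to gradients of all of $H^1(\Omega)$), and this complement is infinite-dimensional on every domain --- it contains, for instance, all curls --- not merely nontrivial on multiply connected or topologically nontrivial domains; you have conflated it with the finite-dimensional space of harmonic fields. Fortunately this does not damage your argument, because the mechanism you propose (testing the first equation with $\bftau$ in the complement, using the $\|\bftau\|^2+\|\div\bftau\|^2$ structure) disposes of the entire divergence-free component at once, as carried out above.
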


\begin{proof}

The result follows by the simple observation that the solution operator
associated with problem~\eqref{eq:F1source} is exactly the same as for the
standard Laplace equation. We would like however to show explicitly that the
eigenvalues of~\eqref{eq:F1} are real since this has interesting implications
for the finite element discretization.

The (non symmetric) operator form of problem~\eqref{eq:F1}, with natural
notation, is given by
\begin{equation}
\left(
\begin{matrix}
\sfA&\sfB^\top\\
\sfB&\sfC
\end{matrix}
\right)
\left(
\begin{matrix}
\sfx\\\sfy
\end{matrix}
\right)=
\lambda\left(
\begin{matrix}
\sfzero&\sfD\\
\sfzero&\sfzero
\end{matrix}
\right)
\left(
\begin{matrix}
\sfx\\\sfy
\end{matrix}
\right)
\label{eq:F1matrix}
\end{equation}
After integration by parts, thanks to the boundary conditions we have
$\sfD=-\sfB^\top$ so that~\eqref{eq:F1matrix} can be reduced to the following
equivalent symmetric Schur complement formulation
\begin{equation}
\sfA\sfx=(\lambda+1)\sfB^\top\sfC^{-1}\sfB\sfx,
\label{eq:matr1}
\end{equation}
where we have used the equality $\sfy=-\sfC^{-1}\sfB\sfx$.

Another possible way of observing that~\eqref{eq:F1matrix} corresponds to a
symmetric problem is to rearrange its terms as follows
\[
\left(
\begin{matrix}
\sfA&\sfzero\\
(\lambda+1)\sfB&(\lambda+1)\sfC
\end{matrix}
\right)
\left(
\begin{matrix}
\sfx\\\sfy
\end{matrix}
\right)=
(\lambda+1)\left(
\begin{matrix}
\sfzero&-\sfB^\top\\
\sfzero&\sfzero
\end{matrix}
\right)
\left(
\begin{matrix}
\sfx\\\sfy
\end{matrix}
\right)
\]
obtaining finally
\[
\left(
\begin{matrix}
\sfA&\sfzero\\
\sfzero&\sfzero
\end{matrix}
\right)
\left(
\begin{matrix}
\sfx\\\sfy
\end{matrix}
\right)=
(\lambda+1)\left(
\begin{matrix}
\sfzero&-\sfB^\top\\
-\sfB&-\sfC
\end{matrix}
\right)
\left(
\begin{matrix}
\sfx\\\sfy
\end{matrix}
\right)
\]

\end{proof}

\begin{remark}
One might think that problem~\eqref{eq:F1} (see in particular
formulation~\eqref{eq:F1matrix}) gives a number of infinite eigenvalues;
however, in our formulation of problem~\eqref{eq:F1} the eigenfunctions we are
looking for correspond to the component $u$ of the solution only. We will go
back to this remark later when the approximation of~\eqref{eq:F1} is
considered.
\end{remark}

\subsection{The transpose FOSLS formulation}

Since our problem is self-adjoint, another possibility is to consider the
transpose of~\eqref{eq:F1}: find $\lambda\in\RE$ and $\bfsigma\in\Hdiv$ with
$\bfsigma\ne\mathbf{0}$ such that for some $u\in H^1_0(\Omega)$ it holds
\begin{equation}
\left\{
\aligned
&(\bfsigma,\bftau)+(\div\bfsigma,\div\bftau)-(\grad
u,\bftau)=0&&\forall\bftau\in\Hdiv\\
&-(\bfsigma,\grad v)+(\grad u,\grad v)=-\lambda(\div\bfsigma,v)&&\forall v\in
H^1_0(\Omega)
\endaligned
\right.
\label{eq:F1*}
\tag{F1*}
\end{equation}
This leads to the following operator form
\begin{equation}
\left(
\begin{matrix}
\sfA&\sfB^\top\\
\sfB&\sfC
\end{matrix}
\right)
\left(
\begin{matrix}
\sfx\\\sfy
\end{matrix}
\right)=
\lambda\left(
\begin{matrix}
\sfzero&\sfzero\\
\sfD^\top&\sfzero
\end{matrix}
\right)
\left(
\begin{matrix}
\sfx\\\sfy
\end{matrix}
\right)
\label{eq:F1*matrix}
\end{equation}
and to the corresponding \emph{symmetric} Schur complement form
\begin{equation}
\sfC\sfy=(\lambda+1)\sfB\sfA^{-1}\sfB^\top\sfy
\label{eq:matr2}
\end{equation}

\begin{proposition}
Problems~\eqref{eq:matr1} and~\eqref{eq:matr2} (and hence
formulations~\eqref{eq:F1} and~\eqref{eq:F1*}) are equivalent.
\end{proposition}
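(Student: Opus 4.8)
The plan is to show that \eqref{eq:matr1} and \eqref{eq:matr2} possess the same eigenvalues $\lambda$ with matching multiplicities, together with an explicit correspondence of eigenvectors; the equivalence of the formulations \eqref{eq:F1} and \eqref{eq:F1*} then follows immediately, because by the construction in the previous proof these two generalized eigenproblems are precisely the symmetric Schur complement reductions of \eqref{eq:F1} and \eqref{eq:F1*}, with the eliminated components recovered through the relations $\sfy=-\sfC^{-1}\sfB\sfx$ and its transpose analogue. First I would record the structural facts that make the reduction legitimate: $\sfA$ (the $\Hdiv$ inner-product matrix) and $\sfC$ (the $H^1_0$ stiffness matrix) are symmetric positive definite, so their SPD square roots exist, and by the previous proposition every relevant eigenvalue satisfies $\lambda>0$, whence $\mu:=\lambda+1>0$; in particular $\mu\ne0$, so throughout we only deal with nonzero eigenvalues.

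The core of the argument is a symmetrization. Setting $G:=\sfC^{-1/2}\sfB\sfA^{-1/2}$ and substituting $\tilde\sfx=\sfA^{1/2}\sfx$ turns \eqref{eq:matr1} into the standard symmetric eigenproblem $G^\top G\,\tilde\sfx=\mu^{-1}\tilde\sfx$, while substituting $\tilde\sfy=\sfC^{1/2}\sfy$ turns \eqref{eq:matr2} into $GG^\top\,\tilde\sfy=\mu^{-1}\tilde\sfy$. I would then invoke the classical linear-algebra fact that $G^\top G$ and $GG^\top$ have identical nonzero spectra with identical multiplicities, via the singular value decomposition $G=U\Sigma V^\top$: the nonzero values $\mu^{-1}=\sigma_i^2$ coincide, and the eigenvectors are paired by $\tilde\sfy\propto G\tilde\sfx$. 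Unwinding the substitutions, this reads $\sfy\propto\sfC^{-1}\sfB\sfx$ in the original variables, consistent with the recovery relation already used in the proof of the previous proposition. This produces a bijection between the eigenpairs of \eqref{eq:matr1} and those of \eqref{eq:matr2}.

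The main obstacle is the bookkeeping around the kernels, since $\sfB$ is in general rectangular and both symmetrized operators $G^\top G$ and $GG^\top$ carry a zero eigenvalue whose eigenspace has no counterpart on the other side; these are exactly the spurious modes flagged in the Remark, which is why the equivalence can only be asserted for the genuine eigenvalues $\lambda$ with $\mu>0$. I would therefore verify carefully that the map $\sfx\mapsto\sfC^{-1}\sfB\sfx$, restricted to the $\mu\ne0$ eigenspaces, is well defined, injective, and surjective onto the matching eigenspace of \eqref{eq:matr2}. Injectivity is the one point requiring a short argument: if $\sfB\sfx=0$ then \eqref{eq:matr1} forces $\sfA\sfx=0$, and since $\sfA$ is invertible this gives $\sfx=0$, so no nonzero eigenvector of \eqref{eq:matr1} is annihilated by the map. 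Once this bijection on the nonzero eigenvalues is established, the stated equivalence of \eqref{eq:F1} and \eqref{eq:F1*} is an immediate consequence.
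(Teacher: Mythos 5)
Your proof is correct, but it takes a genuinely different route from the paper's. The paper's argument is a one-line elimination: since $\sfD=-\sfB^\top$, both reduced problems arise from the \emph{same} block system~\eqref{eq:F1matrix} --- solving the first row for $\sfx$, i.e.\ $\sfx=\sfA^{-1}(\lambda\sfD\sfy-\sfB^\top\sfy)=-(\lambda+1)\sfA^{-1}\sfB^\top\sfy$, and substituting into the second row immediately yields $\sfC\sfy=(\lambda+1)\sfB\sfA^{-1}\sfB^\top\sfy$, which is~\eqref{eq:matr2}; eliminating $\sfy$ instead gives~\eqref{eq:matr1}, so the two Schur complements are tautologically two faces of one system and eigenpairs correspond via the recovery relations. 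You instead symmetrize: with $G=\sfC^{-1/2}\sfB\sfA^{-1/2}$ the two problems become $G^\top G\,\tilde\sfx=(\lambda+1)^{-1}\tilde\sfx$ and $GG^\top\,\tilde\sfy=(\lambda+1)^{-1}\tilde\sfy$, and you invoke the classical identity of the nonzero spectra of $G^\top G$ and $GG^\top$. This is heavier machinery for the same conclusion, but it buys real extra information the paper's proof does not make explicit: positivity of the eigenvalues ($\lambda+1=\sigma_i^{-2}$ in terms of the singular values of $G$), exact multiplicity matching, a clean injectivity argument for the eigenvector correspondence $\sfx\mapsto\sfC^{-1}\sfB\sfx$ (your observation that $\sfB\sfx=0$ forces $\sfA\sfx=0$, hence $\sfx=0$, is exactly the point the paper leaves implicit), and a transparent classification of the zero eigenspaces of $G^\top G$ and $GG^\top$ as the $\lambda_h=\infty$ families, anticipating Propositions~\ref{pr:eigF1h} and~\ref{pr:eigF1*h}. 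Two small remarks: the eigenvector pairing picks up a sign relative to the paper's recovery relation $\sfy=-\sfC^{-1}\sfB\sfx$, which your use of proportionality correctly absorbs; and at the continuous level, where literal SVD is unavailable, your argument survives unchanged if you pair eigenvectors directly via $G^\top G\tilde\sfx=\nu\tilde\sfx\Rightarrow GG^\top(G\tilde\sfx)=\nu\,G\tilde\sfx$, which needs only that $\sfA$ and $\sfC$ are positive self-adjoint so that the square roots exist.
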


\begin{proof}
The equivalence can be seen, for instance, by solving the matrix
problem~\eqref{eq:F1matrix} for $\sfx$, thus obtaining
$\sfx=\sfA^{-1}(\lambda\sfD\sfy-\sfB^\top\sfy)$ which gives
$\sfC\sfy=(\lambda+1)\sfB\sfA^{-1}\sfB^\top\sfy$, that is~\eqref{eq:matr2}.
\end{proof}

\subsection{The \llstar formulation}

Another popular choice for the approximation of the problem under
consideration is the so called \llstar formulation~\cite{llstar}. One of the
reasons for its introduction is the possibility to deal with less regular
right hand sides; moreover, it gives rise to an intrinsically symmetric
formulation, which makes it appealing for the application to eigenvalue
problems. In the case of the source problem it reads: find $\bfchi\in\Hdiv$
and $p\in H^1_0(\Omega)$ such that
\begin{equation}
\left\{
\aligned
&(\bfchi,\bfxi)+(\div\bfchi,\div\bfxi)-(\grad
p,\bfxi)=0&&\forall\bfxi\in\Hdiv\\
&-(\bfchi,\grad q)+(\grad p,\grad q)=(f,q)&&\forall q\in H^1_0(\Omega)
\endaligned
\right.
\label{eq:LL*source}
\end{equation}
It turns out that this formulation is related to our original Laplace problem
by the following relation:
\begin{equation}
\aligned
&-\Delta u=f\\
&-\Delta p = f-u\\
&\bfchi=\grad(p-u)\\
&\div\bfchi=u
\endaligned
\label{eq:relations}
\end{equation}

The eigenvalue problem associated with~\eqref{eq:LL*source} is: find
$\mu\in\RE$ and $p\in H^1_0(\Omega)$, with $p\ne0$, such that for some
$\bfchi\in\Hdiv$ it holds
\begin{equation}
\left\{
\aligned
&(\bfchi,\bfxi)+(\div\bfchi,\div\bfxi)-(\grad
p,\bfxi)=0&&\forall\bfxi\in\Hdiv\\
&-(\bfchi,\grad q)+(\grad p,\grad q)=\mu(p,q)&&\forall q\in H^1_0(\Omega)
\endaligned
\right.
\label{eq:LL*}
\tag{LL*}
\end{equation}
As already anticipated, this problem is symmetric and it can be written in the
following form in terms of the underlined operators:
\[
\left(
\begin{matrix}
\sfA&\sfB^\top\\
\sfB&\sfC
\end{matrix}
\right)
\left(
\begin{matrix}
\sfx\\
\sfy
\end{matrix}
\right)
=\mu
\left(
\begin{matrix}
\sfzero&\sfzero\\
\sfzero&\sfM
\end{matrix}
\right)
\left(
\begin{matrix}
\sfx\\
\sfy
\end{matrix}
\right)
\]

By using the links between the \llstar formulation and the original problem,
as stated in~\eqref{eq:relations}, we can see how to relate the eigenvalues
of~\eqref{eq:LL*} to the ones of the problem we are interested in.

\begin{proposition}
The eigenvalues $\mu$ of~\eqref{eq:LL*} are in one-to-one correspondence with
the eigenvalues $\lambda$ of the Laplace eigenproblem using the relation
\[
\lambda=\frac{\mu+\sqrt{\mu^2+4}}{2}
\]
Moreover, the eigenfunctions $u$ of the Laplace eigenproblem are given by
$\div\bfchi$ and their gradients $\grad u$ are equal to $\grad p-\bfchi$.
\label{pr:ll*}
\end{proposition}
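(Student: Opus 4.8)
The plan is to exploit the fact that the eigenproblem \eqref{eq:LL*} is nothing but the source problem \eqref{eq:LL*source} in which the datum is taken to be $f=\mu p$; once this is noticed, the correspondence \eqref{eq:relations} can be read off directly. First I would set $u:=\div\bfchi$ and invoke \eqref{eq:relations} with $f=\mu p$, which simultaneously delivers the claimed eigenfunction identification ($u=\div\bfchi$ and $\grad u=\grad p-\bfchi$) and the two scalar relations $-\Delta u=\mu p$ and $-\Delta p=\mu p-u$. In this way the single vector-valued problem is reduced to a coupled pair of scalar Poisson-type identities for the potentials $u$ and $p$, both lying in $H^1_0(\Omega)$.

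Next I would diagonalise this coupled pair against the $L^2(\Omega)$-orthonormal basis of Dirichlet Laplace eigenfunctions. Writing $u=\sum_k u_k\phi_k$ and $p=\sum_k p_k\phi_k$ with $-\Delta\phi_k=\lambda_k\phi_k$, the two relations decouple mode by mode into a $2\times2$ homogeneous linear system in $(u_k,p_k)$ whose coefficients depend on $\lambda_k$ and $\mu$. A nontrivial mode survives precisely when the determinant vanishes, which after simplification is a single quadratic equation in $\lambda_k$ carrying $\mu$ as a parameter; solving it and retaining the positive root yields $\lambda=(\mu+\sqrt{\mu^2+4})/2$. Each surviving mode forces $p_k$ to be a fixed multiple of $u_k$, so that $u$ is a genuine Dirichlet eigenfunction with eigenvalue $\lambda$ and $p$ is parallel to it; the degenerate case $\mu=0$ (equivalently $\lambda=1$) is read off directly from the same system.

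Finally I would turn this modewise analysis into the asserted bijection. Solving the quadratic the other way expresses $\mu$ as a single-valued, monotone function of $\lambda>0$, so that every Dirichlet eigenvalue produces exactly one \llstar eigenvalue, while conversely the displayed formula returns, for each real $\mu$, a unique positive $\lambda$; matching the modewise multiplicities then shows that the eigenspaces correspond under $u=\div\bfchi$. I expect the genuine obstacle to be precisely this last bookkeeping: one must verify that the passage through \eqref{eq:relations} neither loses nor creates eigenmodes (no spurious or infinite eigenvalues survive beyond those carried by the potential component, in the spirit of the Remark following \eqref{eq:F1matrix}), that the choice of the positive square root is forced by $\lambda>0$, and that multiplicities are preserved, so that the correspondence is truly one-to-one rather than merely a matching of the two spectra as sets.
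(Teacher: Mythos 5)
Your overall strategy is exactly the one the paper has in mind (the paper in fact prints no proof at all: it just points at \eqref{eq:relations} before stating the proposition), and the reduction to a modewise $2\times2$ determinant condition in the Dirichlet eigenbasis is the right way to make that one-liner rigorous. But the step you wave through --- ``after simplification \dots\ yields $\lambda=(\mu+\sqrt{\mu^2+4})/2$'' --- is precisely the step that fails, and you would have caught it had you carried out the computation. With the relations \eqref{eq:relations} as printed ($-\Delta u=\mu p$, $-\Delta p=\mu p-u$), the modewise system is $\lambda u_k=\mu p_k$, $\lambda p_k=\mu p_k-u_k$, whose determinant condition is $\lambda^2-\mu\lambda+\mu=0$, with roots $\lambda=\bigl(\mu\pm\sqrt{\mu^2-4\mu}\bigr)/2$ --- complex for $0<\mu<4$, a clear red flag. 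The flag is real: \eqref{eq:relations} as printed is internally inconsistent with \eqref{eq:LL*source}. Substituting $\bfchi=\grad(p-u)$ and $\div\bfchi=u$ into the first equation of \eqref{eq:LL*source} leaves a residual $-2(\grad u,\bfxi)\ne0$; the consistent set of relations is $-\Delta u=f$, $-\Delta p=f+u$, $\bfchi=\grad(p-u)$, $\div\bfchi=-u$. With these, the modewise determinant is $\lambda^2-\mu\lambda-\mu=0$, i.e.
\[
\mu=\frac{\lambda^2}{\lambda+1},\qquad
\lambda=\frac{\mu+\sqrt{\mu^2+4\mu}}{2},
\]
which differs from the proposition's displayed formula (apparently a typo, $\sqrt{\mu^2+4}$ for $\sqrt{\mu^2+4\mu}$). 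Your aside that $\mu=0$ corresponds to $\lambda=1$ betrays the same problem: it is read off from the claimed formula, whereas the correct map $\mu=\lambda^2/(\lambda+1)$ is strictly increasing on $(0,\infty)$ with $\mu>0$ always, so no degenerate case $\mu=0$ ever occurs --- and this monotonicity is also what gives you the one-to-one correspondence and multiplicity preservation you worried about at the end.

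A decisive and shorter check, independent of \eqref{eq:relations}, is the one-mode ansatz directly in \eqref{eq:LL*}: take $p=\phi_k$ with $-\Delta\phi_k=\lambda_k\phi_k$, $\phi_k\in H^1_0(\Omega)$, and $\bfchi=c\,\grad\phi_k\in\Hdiv$. Since $(\phi_k,\div\bfxi)=-(\grad\phi_k,\bfxi)$, the first equation of \eqref{eq:LL*} reduces to $\bigl(c(1+\lambda_k)-1\bigr)(\grad\phi_k,\bfxi)=0$, forcing $c=1/(1+\lambda_k)$, and the second gives $\mu=(1-c)\lambda_k=\lambda_k^2/(1+\lambda_k)$; expanding a general $p$ in the basis $\{\phi_k\}$ shows these exhaust the eigenmodes with $p\ne0$. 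Note also that on each such mode $\div\bfchi=-\tfrac{\lambda_k}{1+\lambda_k}\phi_k$ and $\grad p-\bfchi=\tfrac{\lambda_k}{1+\lambda_k}\grad\phi_k$, so the eigenfunction identifications in the statement are correct only up to sign/normalization (they span the right eigenspace, which is all one can ask); your plan's final ``bookkeeping'' paragraph is sound, but it should be built on the corrected quadratic, not on the printed one.
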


\subsection{Enriching the formulations with $\curl\bfsigma$}
\label{ss:curl}

Since $\bfsigma$ is a gradient, it satisfies $\curl\bfsigma=0$; a commonly
used modification of the FOSLS methods consists in using a least-squares
functional that contains the term $\curl\bfsigma$, that is,
\[
\mathcal{F}(\bftau,v)=\|\bftau-\grad
v\|^2+\|\curl\bfsigma\|^2+\|\div\bftau+f\|^2
\]

With natural modifications the two corresponding formulations read: find
$\lambda\in\RE$ and $u\in H^1_0(\Omega)$ with $u\ne0$ such that for some
$\bfsigma\in\Hdiv\cap\Hcurl$ it holds
\begin{equation}
\left\{
\aligned
&(\bfsigma,\bftau)+(\div\bfsigma,\div\bftau)+(\curl\bfsigma,\curl\bftau)-(\grad
u,\bftau)=-\lambda(u,\div\bftau)\\
&\hspace{6cm}\forall\bftau\in\Hdiv\cap\Hcurl\\
&-(\bfsigma,\grad v)+(\grad u,\grad v)=0
\hspace{1.8cm}\forall v\in H^1_0(\Omega)
\endaligned
\right.
\label{eq:F1curl}
\tag{F1curl}
\end{equation}
and find $\lambda\in\RE$ and $\bfsigma\in\Hdiv\cap\Hcurl$ with
$\bfsigma\ne\mathbf{0}$ such that for some $u\in H^1_0(\Omega)$ it holds
\begin{equation}
\left\{
\aligned
&(\bfsigma,\bftau)+(\div\bfsigma,\div\bftau)+(\curl\bfsigma,\curl\bftau)-(\grad
u,\bftau)=0\\
&\hspace{6.9cm}\forall\bftau\in\Hdiv\cap\Hcurl\\
&-(\bfsigma,\grad v)+(\grad u,\grad v)=-\lambda(\div\bfsigma,v)
\hspace{1.0cm}\forall v\in H^1_0(\Omega)
\endaligned
\right.
\label{eq:F1*curl}
\tag{F1*curl}
\end{equation}
which lead to reduced formulations analogous to the previous ones with
appropriate modification of the matrix $\sfA$.

\begin{remark}
Sometimes formulation~\eqref{eq:F1curl} is presented in the literature with a
different choice of functional spaces, that is
$\{\bfsigma,\bftau\}\in\mathbf{H}^1(\Omega)$ instead of
$\{\bfsigma,\bftau\}\in\Hdiv\cap\Hcurl$. Although for smooth domains the two
spaces are the same, this is not the case when singular solutions are
presented, that could be in $\Hdiv\cap\Hcurl$ but not in
$\mathbf{H}^1(\Omega)$.
\end{remark}

In a natural way it is possible to consider the \llstar formulation associated
to the formulation enriched with $\curl\bfsigma$: find
$\bfchi\in\Hdiv\cap\Hcurl$, $p\in H^1_0(\Omega)$, and $z\in H^1(\Omega)$, such
that
\[
\left\{
\aligned
&(\bfchi,\bfxi)+(\div\bfchi,\div\bfxi)+(\curl\bfchi,\curl\bfxi)\\
&\hspace{3cm}-(\grad p,\bfxi)+(\curl z,\bfxi)=0&&
\forall\bfxi\in\Hdiv\cap\Hcurl\\
&-(\bfchi,\grad q)+(\grad p,\grad q)-(\curl z,\grad q)=(f,q)&&\forall q\in
H^1_0(\Omega)\\
&(\bfchi,\curl w)-(\grad p,\curl w)+(\curl z,\curl w)=0&&\forall w\in
H^1(\Omega)
\endaligned
\right.
\]

The corresponding eigenvalue problem is then: find $\lambda\in\RE$ and $p\in
H^1_0(\Omega)$, with $p\ne0$, such that for some $\bfchi\in\Hdiv\cap\Hcurl$
and $z\in H^1(\Omega)$ it holds
\begin{equation}
\left\{
\aligned
&(\bfchi,\bfxi)+(\div\bfchi,\div\bfxi)+(\curl\bfchi,\curl\bfxi)\\
&\hspace{3cm}-(\grad p,\bfxi)+(\curl z,\bfxi)=0&&
\forall\bfxi\in\Hdiv\cap\Hcurl\\
&-(\bfxi,\grad q)+(\grad p,\grad q)-(\curl z,\grad q)=\lambda(p,q)&&
\forall q\in H^1_0(\Omega)\\
&(\bfchi,\curl w)-(\grad p,\curl w)+(\curl z,\curl w)=0&&\forall w\in
H^1(\Omega)
\endaligned
\right.
\label{eq:LL*curl}
\tag{LL*curl}
\end{equation}

The operators structure of this problem is now
\[
\left(
\begin{matrix}
\sfA&\sfB^\top&\sfC^\top\\
\sfB&\sfD&\sfE^\top\\
\sfC&\sfE&\sfF
\end{matrix}
\right)
\left(
\begin{matrix}
\sfx\\
\sfy\\
\sfz
\end{matrix}
\right)
=\mu
\left(
\begin{matrix}
\sfzero&\sfzero&\sfzero\\
\sfzero&\sfM&\sfzero\\
\sfzero&\sfzero&\sfzero
\end{matrix}
\right)
\left(
\begin{matrix}
\sfx\\
\sfy\\
\sfz
\end{matrix}
\right)
\]

\section{Galerkin discretizazion}

We now discuss the Galerkin discretization of the problems we have introduced
in the previous section.

\subsection{Approximation of the FOSLS formulations}

Let $\Sigma_h\subset\Hdiv$ and $U_h\subset H^1_0(\Omega)$ be conforming finite
element spaces. The discretization of~\eqref{eq:F1} reads: find
$\lambda_h\in\RE$ and $u_h\in U_h$ with $u_h\ne0$ such that for some
$\bfsigma_h\in\Sigma_h$ it holds
\begin{equation}
\left\{
\aligned
&(\bfsigma_h,\bftau)+(\div\bfsigma_h,\div\bftau)-(\grad
u_h,\bftau)=-\lambda_h(u_h,\div\bftau)&&\forall\bftau\in\Sigma_h\\
&-(\bfsigma_h,\grad v)+(\grad u_h,\grad v)=0&&\forall v\in U_h
\endaligned
\right.
\label{eq:F1h}
\tag{F1h}
\end{equation}

Analogously, the approximation of~\eqref{eq:F1*} has the following form: find
$\lambda_h\in\RE$ and $\bfsigma_h\in\Sigma_h$ with $\bfsigma_h\ne\mathbf{0}$
such that for some $u_h\in U_h$ it holds
\begin{equation}
\left\{
\aligned
&(\bfsigma_h,\bftau)+(\div\bfsigma_h,\div\bftau)-(\grad
u_h,\bftau)=0&&\forall\bftau\in\Sigma_h\\
&-(\bfsigma_h,\grad v)+(\grad u_h,\grad v)=-\lambda_h(\div\bfsigma_h,v)
&&\forall v\in U_h
\endaligned
\right.
\label{eq:F1*h}
\tag{F1*h}
\end{equation}

After introducing basis functions of $\Sigma_h$ and $U_h$, the matrix
structure of Problems~\eqref{eq:F1h} and~\eqref{eq:F1*h} are the ones already
anticipated in~\eqref{eq:F1matrix} and~\eqref{eq:F1*matrix} and that will be
repeated in the next two propositions, where we characterize their
eigensolutions. We will then show that the relevant eigenmodes of the two
formulations are identical.

Before giving a characterization of the eigenvalues of our discrete
formulation, we discuss in the following remark the solution of (possibly
degenerate) generalized eigenvalue problems.

\begin{remark}

In general our discrete problems have the form of a generalized eigenvalue
problem
\begin{equation}
\mathcal{A}x=\lambda\mathcal{B}x
\label{eq:kernel}
\end{equation}
where the matrices $\mathcal{A}$ and/or $\mathcal{B}$ may be singular. The
solution of this problem satisfies the following properties.

\begin{enumerate}

\item If the matrix $\mathcal{B}$ is invertible, then~\eqref{eq:kernel} is
equivalent to the standard eigenvalue problem
$\mathcal{B}^{-1}\mathcal{A}x=\lambda x$.

\item If $\mathcal{K}=\ker\mathcal{A}\cap\ker\mathcal{B}$ is not trivial then
the eigenvalue problem is degenerate and vectors in $\mathcal{K}$ do not
correspond to any eigenvalue of~\eqref{eq:kernel}.

\item If the matrix $\mathcal{B}$ has a non-trivial kernel $\ker(\mathcal{B})$
which does not contain any nonzero vector of $\ker(\mathcal{A})$ then it is
conventionally assumed that~\eqref{eq:kernel} has an eigenvalue
$\lambda=\infty$ with eigenspace equal to $\ker(\mathcal{B})$.

\item If $\mathcal{B}$ is singular and $\mathcal{A}$ is not (which is the most
common situation in our framework) then it may be convenient to switch the
roles of the two matrices and to consider the problem
\[
\mathcal{B}x=\mu\mathcal{A}x
\]
Then $(\mu,x)$ with $\mu=0$ corresponds to the eigenmode $(\infty,x)$
of~\eqref{eq:kernel}; the remaining eigenmodes are $(\lambda,x)$ with
$\lambda=1/\mu$.

\end{enumerate}

\end{remark}

The next proposition is related to the eigensolutions to~\eqref{eq:F1h}.

\begin{proposition}

Let us consider the following matrices associated with Problem~\eqref{eq:F1h}.
\begin{itemize}
\item $\sfA$ is the matrix associated with the bilinear form
$(\bfsigma,\bftau)+(\div\bfsigma,\div\bftau)$, 
\item $\sfB$ is the matrix associated with the bilinear form
$-(\bfsigma,\grad v)$,
\item $\sfC$ is the matrix associated with the bilinear form
$(\grad u,\grad v)$.
\end{itemize}
Then the following generalized problem (see~\eqref{eq:F1matrix})
\[
\left(
\begin{matrix}
\sfA&\sfB^\top\\
\sfB&\sfC
\end{matrix}
\right)
\left(
\begin{matrix}
\sfx\\\sfy
\end{matrix}
\right)=
\lambda_h\left(
\begin{matrix}
\sfzero&-\sfB^\top\\
\sfzero&\sfzero
\end{matrix}
\right)
\left(
\begin{matrix}
\sfx\\\sfy
\end{matrix}
\right)
\]
has three families of eigenvalues. More precisely:

\begin{enumerate}

\item $\lambda_h=\infty$ with multiplicity equal to $\dim\Sigma_h$,

\item $\lambda_h=\infty$ with multiplicity equal to $\dim\ker(B^\top)$,

\item a number of positive eigenvalues $\lambda_h$ (counted with their
multiplicities) equal to $\rank(B^\top)$.

\end{enumerate}
\label{pr:eigF1h}
\end{proposition}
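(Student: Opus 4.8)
The plan is to read the displayed pencil as $\mathcal{A}z=\lambda_h\mathcal{B}z$ with $z=(\sfx,\sfy)$, where $\mathcal{A}$ is the symmetric block matrix on the left and $\mathcal{B}$ the right-hand block matrix whose only nonzero block is $-\sfB^\top$ in the upper-right corner, and then to exploit that $\mathcal{A}$ is invertible. First I would record that $\sfA$, the Gram matrix of the $\Hdiv$ inner product on $\Sigma_h$, and $\sfC$, the stiffness matrix of $(\grad u,\grad v)$ on $U_h\subset H^1_0(\Omega)$, are both symmetric positive definite, hence invertible; the identification of the right-hand block rests on $(u,\div\bftau)=-(\grad u,\bftau)$ for $u\in U_h$, $\bftau\in\Sigma_h$, exactly the integration by parts used earlier. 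Evaluating the quadratic form of $\mathcal{A}$ on $z\leftrightarrow(\bfsigma_h,u_h)$ gives $\|\bfsigma_h-\grad u_h\|^2+\|\div\bfsigma_h\|^2$, which vanishes only when $\bfsigma_h=\grad u_h$ and $\div\bfsigma_h=0$; the latter forces $(\grad u_h,\grad\phi)=0$ for all $\phi\in H^1_0(\Omega)$, so $u_h=0$ and $\bfsigma_h=0$. Thus $\mathcal{A}$ is symmetric positive definite and $\ker\mathcal{A}=\{0\}$.

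Next I would dispose of the infinite eigenvalues. Since $\mathcal{B}$ maps $(\sfx,\sfy)$ to $(-\sfB^\top\sfy,0)$, its kernel is $\ker\mathcal{B}=\{(\sfx,\sfy):\sfB^\top\sfy=0\}$, of dimension $\dim\Sigma_h+\dim\ker(\sfB^\top)$, while $\rank\mathcal{B}=\rank(\sfB^\top)$. Because $\ker\mathcal{A}=\{0\}$, the kernel of $\mathcal{B}$ contains no nonzero vector of $\ker\mathcal{A}$, so by item~(3) of the preceding remark every vector of $\ker\mathcal{B}$ is an eigenvector for $\lambda_h=\infty$. Splitting this eigenspace along the block structure into the part with $\sfy=0$ (dimension $\dim\Sigma_h$) and the part with $\sfx=0$, $\sfy\in\ker(\sfB^\top)$ (dimension $\dim\ker(\sfB^\top)$) yields precisely families~(1) and~(2).

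For the finite eigenvalues I would pass to the Schur complement as in the proof of the first proposition: eliminating $\sfy=-\sfC^{-1}\sfB\sfx$ turns the pencil into~\eqref{eq:matr1}, i.e.\ $\sfA\sfx=(\lambda_h+1)\mathsf{G}\sfx$ with $\mathsf{G}:=\sfB^\top\sfC^{-1}\sfB$. As $\sfC^{-1}$ is positive definite, $\mathsf{G}$ is symmetric positive semidefinite with $\ker\mathsf{G}=\ker\sfB$ and $\rank\mathsf{G}=\rank(\sfB^\top)$. Writing $\sfA=\mathsf{R}^\top\mathsf{R}$ recasts this as the standard symmetric problem $\mathsf{R}^{-\top}\mathsf{G}\mathsf{R}^{-1}w=(\lambda_h+1)^{-1}w$, whose matrix is symmetric positive semidefinite of rank $\rank(\sfB^\top)$; it therefore has exactly $\rank(\sfB^\top)$ positive eigenvalues, the remaining ones being zero (that is, $\lambda_h=\infty$). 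Each such reduced eigenpair lifts via $\sfy=-\sfC^{-1}\sfB\sfx$ to a finite eigenpair of the full pencil and conversely, so family~(3) has cardinality $\rank(\sfB^\top)$, consistent with $\dim\Sigma_h+\dim\ker(\sfB^\top)+\rank(\sfB^\top)=\dim\Sigma_h+\dim U_h$.

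The step I expect to be the main obstacle is showing that these finite eigenvalues are not merely $>-1$ (immediate from $\mathsf{G}\ge0$) but strictly positive, and this I would do variationally for an eigenpair with $u_h\ne0$. The second equation with $v=u_h$ gives $\|\grad u_h\|^2=(\bfsigma_h,\grad u_h)$, hence $\|\grad u_h\|\le\|\bfsigma_h\|$ by Cauchy--Schwarz; rewriting the first equation through $(u_h,\div\bftau)=-(\grad u_h,\bftau)$ and testing with $\bftau=\bfsigma_h$ yields $\|\bfsigma_h\|^2+\|\div\bfsigma_h\|^2=(\lambda_h+1)\|\grad u_h\|^2$. Combining the two gives $\lambda_h+1\ge\|\bfsigma_h\|^2/\|\grad u_h\|^2\ge1$, so $\lambda_h\ge0$; and $\lambda_h=0$ would force $\div\bfsigma_h=0$ together with equality in Cauchy--Schwarz, i.e.\ $\bfsigma_h=\grad u_h$, whence $(\grad u_h,\grad\phi)=-(\div\bfsigma_h,\phi)=0$ for all $\phi\in H^1_0(\Omega)$ and thus $u_h=0$, a contradiction. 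Hence $\lambda_h>0$, completing the description of family~(3).
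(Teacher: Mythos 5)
Your proof is correct and follows essentially the same route as the paper's: the infinite eigenvalues are read off from the kernel of the right-hand-side matrix (split into the part with $\sfy=0$ and the part with $\sfx=0$, $\sfy\in\ker(\sfB^\top)$), the finite ones come from the Schur complement $\sfA\sfx=(\lambda_h+1)\sfB^\top\sfC^{-1}\sfB\sfx$, and the count closes via $\dim U_h=\dim\ker(\sfB^\top)+\rank(\sfB^\top)$. You do supply two details the paper's sketch leaves implicit -- the positive definiteness of the full system matrix through the quadratic form $\|\bfsigma_h-\grad u_h\|^2+\|\div\bfsigma_h\|^2$ (which justifies both the $\lambda_h=\infty$ convention and the elimination $\sfy=-\sfC^{-1}\sfB\sfx$), and the strict positivity $\lambda_h>0$ of the third family via the identity $\|\bfsigma_h\|^2+\|\div\bfsigma_h\|^2=(\lambda_h+1)\|\grad u_h\|^2$ together with $\|\grad u_h\|\le\|\bfsigma_h\|$, a fact the paper asserts without proof -- but these are completions of the same argument rather than a different approach.
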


\begin{proof}
The dimension of the eigenproblem is $\dim\Sigma_h+\dim U_h$ which is clearly
equal to the number of eigenvalues in the three families since
$\dim U_h=\dim\ker(B^\top)+\rank(B^\top)$.

The eigenvalues of the first and of the second family are associated to
eigenvectors in the kernel of the matrix on the right hand side. Those are of
the form $(\sfx,\sfy)^\top$ with $\sfx$ corresponding to any element in
$\Sigma_h$ and $\sfy$ corresponding to elements of $U_h$ in $\ker(B^\top)$.

The eigenvalues of the third family are characterized by looking at the Schur
complement
\[
\sfA\sfx=(\lambda_h+1)\sfB^\top\sfC^{-1}\sfB\sfx
\]
\end{proof}

The following proposition is related to the eigensolutions to~\eqref{eq:F1*h}.

\begin{proposition}
Let $\sfA$, $\sfB$, and $\sfC$ be the matrices introduced in
Proposition~\ref{pr:eigF1h}. Then the following generalized eigenvalue
problem associated with Problem~\eqref{eq:F1*h}
\[
\left(
\begin{matrix}
\sfA&\sfB^\top\\
\sfB&\sfC
\end{matrix}
\right)
\left(
\begin{matrix}
\sfx\\\sfy
\end{matrix}
\right)=
\lambda_h\left(
\begin{matrix}
\sfzero&\sfzero\\
-\sfB&\sfzero
\end{matrix}
\right)
\left(
\begin{matrix}
\sfx\\\sfy
\end{matrix}
\right)
\]
has three families of eigenvalues. More precisely:

\begin{enumerate}

\item $\lambda_h=\infty$ with multiplicity $\dim U_h$,

\item $\lambda_h=\infty$ with multiplicity $\dim\ker(B)$,

\item a number of positive eigenvalues $\lambda_h$ (counted with their
multiplicities) equal to $\rank(B)$.

\end{enumerate}
\label{pr:eigF1*h}
\end{proposition}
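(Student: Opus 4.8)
The plan is to reproduce, \emph{mutatis mutandis}, the three-step argument used for Proposition~\ref{pr:eigF1h}, taking advantage of the fact that the pencil in~\eqref{eq:F1*h} is the transpose of the one in~\eqref{eq:F1h}: the left-hand matrix $\left(\begin{matrix}\sfA&\sfB^\top\\\sfB&\sfC\end{matrix}\right)$ is symmetric and unchanged, while the right-hand matrix is now $\left(\begin{matrix}\sfzero&\sfzero\\-\sfB&\sfzero\end{matrix}\right)$. Since transposition preserves the characteristic polynomial $\det(\mathcal{A}-\lambda_h\mathcal{B})$, the eigenvalues (with algebraic multiplicities) must coincide with those of~\eqref{eq:F1h}, and the claimed families are exactly those of Proposition~\ref{pr:eigF1h} with the roles of $\Sigma_h$ and $U_h$ (equivalently of $\sfB$ and $\sfB^\top$) interchanged; this is the sanity check I would keep in mind throughout. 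To make the argument self-contained I would first record that $\sfA$, the Gram matrix of $(\bfsigma,\bftau)+(\div\bfsigma,\div\bftau)$ on $\Sigma_h\subset\Hdiv$, and $\sfC$, the stiffness matrix of $(\grad u,\grad v)$ on $U_h$, are both symmetric positive definite, hence invertible.

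Next I would treat the two infinite families by computing the kernel of the right-hand matrix $\mathcal{B}$. A pair $(\sfx,\sfy)$ lies in $\ker\mathcal{B}$ precisely when $\sfB\sfx=\sfzero$, with $\sfy$ unconstrained; thus $\ker\mathcal{B}$ has dimension $\dim\ker(\sfB)+\dim U_h$ and splits into the part $\{\sfx=\sfzero\}$, of dimension $\dim U_h$ (family~1), and the part $\{\sfx\in\ker(\sfB),\ \sfy=\sfzero\}$, of dimension $\dim\ker(\sfB)$ (family~2). To invoke the convention of item~(3) of the Remark — that all of $\ker\mathcal{B}$ consists of genuine $\lambda_h=\infty$ eigenvectors — I would verify the non-degeneracy condition $\ker\mathcal{A}\cap\ker\mathcal{B}=\{0\}$: if $\sfB\sfx=\sfzero$ and $(\sfx,\sfy)\in\ker\mathcal{A}$, the second block row gives $\sfC\sfy=\sfzero$, whence $\sfy=\sfzero$, and then the first block row gives $\sfA\sfx=\sfzero$, whence $\sfx=\sfzero$, using only the invertibility of $\sfA$ and $\sfC$.

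The finite eigenvalues (family~3) I would isolate through the symmetric Schur complement. Because $\sfA$ is invertible, the first block row forces $\sfx=-\sfA^{-1}\sfB^\top\sfy$ for every eigenmode, and substitution into the second reproduces exactly~\eqref{eq:matr2}, namely $\sfC\sfy=(\lambda_h+1)\,\sfB\sfA^{-1}\sfB^\top\sfy$. Here $\sfC$ is symmetric positive definite and $\sfS:=\sfB\sfA^{-1}\sfB^\top$ is symmetric positive semidefinite with $\rank(\sfS)=\rank(\sfB)$, so the pencil $\sfS\sfy=\theta\,\sfC\sfy$, with $\theta=1/(\lambda_h+1)$, has exactly $\rank(\sfB)$ positive eigenvalues and $\dim U_h-\rank(\sfB)$ vanishing ones. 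The vanishing ones satisfy $\sfB^\top\sfy=\sfzero$, hence $\sfx=\sfzero$, and therefore already sit inside $\ker\mathcal{B}$ (family~1), so that family~3 consists precisely of the $\rank(\sfB)$ modes with $\theta>0$; for these $\sfB\sfx=-\sfS\sfy\neq\sfzero$, which shows they are disjoint from families~1 and~2. Counting, the three families account for $\dim U_h+\dim\ker(\sfB)+\rank(\sfB)=\dim U_h+\dim\Sigma_h$ eigenvalues, i.e. the full size of the pencil, so the classification is exhaustive.

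The step I expect to be the real obstacle is proving that the $\rank(\sfB)$ finite eigenvalues are strictly \emph{positive} rather than merely nonnegative, i.e. that $\theta<1$. I would establish this from a Cauchy--Schwarz estimate on the Schur complement: writing $\sfy^\top\sfS\sfy=\|\boldsymbol{\rho}\|_\sfA^2$ with $\boldsymbol{\rho}\in\Sigma_h$ solving $(\boldsymbol{\rho},\bftau)+(\div\boldsymbol{\rho},\div\bftau)=-(\grad u_h,\bftau)$ for all $\bftau\in\Sigma_h$ (where $u_h\leftrightarrow\sfy$), one gets $\|\boldsymbol{\rho}\|_\sfA^2=-(\grad u_h,\boldsymbol{\rho})\le\|\grad u_h\|\,\|\boldsymbol{\rho}\|\le\|\grad u_h\|\,\|\boldsymbol{\rho}\|_\sfA$, hence $\sfy^\top\sfS\sfy\le\|\grad u_h\|^2=\sfy^\top\sfC\sfy$. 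Equality would require simultaneously $\div\boldsymbol{\rho}=0$ and $\boldsymbol{\rho}$ parallel to $\grad u_h$ with $\grad u_h\in\Sigma_h$, forcing $\Delta u_h=0$ and therefore $u_h=0$ for $u_h\in H^1_0(\Omega)$; thus the inequality is strict on the relevant subspace and $\lambda_h=1/\theta-1>0$, completing the identification of family~3 as positive eigenvalues.
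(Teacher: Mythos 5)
Your proof is correct, and its skeleton is exactly the paper's: the paper dispatches this proposition in one line, declaring it analogous to Proposition~\ref{pr:eigF1h} via the Schur complement $\sfC\sfy=(\lambda_h+1)\sfB\sfA^{-1}\sfB^\top\sfy$, which is precisely the reduction you carry out, together with the kernel count $\dim U_h+\dim\ker(\sfB)$ for the infinite eigenvalues that mirrors the earlier proof. Where you go beyond the paper is in making its tacit steps explicit, and in one place you close a genuine gap. You verify the non-degeneracy condition $\ker\mathcal{A}\cap\ker\mathcal{B}=\{0\}$ needed to invoke the $\lambda_h=\infty$ convention of the Remark, and you check that the $\theta=0$ modes of the symmetric pencil $\sfB\sfA^{-1}\sfB^\top\sfy=\theta\,\sfC\sfy$ have $\sfx=\sfzero$ (hence are already counted in the first family) while the $\theta>0$ modes satisfy $\sfB\sfx\ne\sfzero$ (hence are disjoint from the kernel families); this exhaustiveness bookkeeping is left implicit in the paper. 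More substantially, you actually prove that the finite eigenvalues are strictly positive: the estimate $\sfy^\top\sfB\sfA^{-1}\sfB^\top\sfy\le\sfy^\top\sfC\sfy$, with equality forcing $\div\boldsymbol{\rho}=0$ and $\grad u_h\in\Sigma_h$, hence $u_h$ harmonic in $H^1_0(\Omega)$ and therefore zero, gives $\theta<1$ and thus $\lambda_h=1/\theta-1>0$; the paper asserts positivity both here and in Proposition~\ref{pr:eigF1h} without justification, so this is an improvement rather than a deviation. Finally, your opening transposition remark ($\det(\mathcal{A}-\lambda_h\mathcal{B})$ is invariant under transposing the pencil since the left-hand matrix is symmetric) is a sound consistency check, but note it identifies the finite spectra of \eqref{eq:F1h} and \eqref{eq:F1*h} only as multisets; the family-by-family classification still requires the direct computation you provide.
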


\begin{proof}
The proof is analogous to the one of Proposition~\ref{pr:eigF1h} by
considering the corresponding Schur complement
\[
\sfC\sfy=(\lambda_h+1)\sfB\sfA^{-1}\sfB^\top\sfy
\]
\end{proof}

Since we started from a self-adjoint problem, it is not surprising that
formulations~\eqref{eq:F1h} and~\eqref{eq:F1*h} are indeed equivalent. This
will be shown in the next proposition.

\begin{proposition}

The eigenmodes of the third families in Propositions~\eqref{pr:eigF1h}
and~\eqref{pr:eigF1*h} are the same.
\label{pr:equiv}
\end{proposition}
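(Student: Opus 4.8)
The plan is to show that the two reduced (Schur complement) eigenvalue problems appearing at the ends of the proofs of Propositions~\ref{pr:eigF1h} and~\ref{pr:eigF1*h} have exactly the same nonzero-finite spectrum, and that the eigenvectors correspond under the natural relations $\sfy=-\sfC^{-1}\sfB\sfx$ and $\sfx=\sfA^{-1}(\lambda_h\sfD\sfy-\sfB^\top\sfy)$. These are precisely the discrete analogues of the continuous equivalence already recorded in the second Proposition of the paper (relating~\eqref{eq:matr1} and~\eqref{eq:matr2}), so I would simply repeat that argument at the discrete level, being careful that the matrices $\sfA$ and $\sfC$ are invertible on the relevant spaces. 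Concretely, the third family of~\eqref{eq:F1h} is governed by $\sfA\sfx=(\lambda_h+1)\sfB^\top\sfC^{-1}\sfB\sfx$ and the third family of~\eqref{eq:F1*h} by $\sfC\sfy=(\lambda_h+1)\sfB\sfA^{-1}\sfB^\top\sfy$; writing $\nu=\lambda_h+1$, the claim reduces to showing that $\sfB^\top\sfC^{-1}\sfB$ (as a pencil against $\sfA$) and $\sfB\sfA^{-1}\sfB^\top$ (as a pencil against $\sfC$) share the same nonzero generalized eigenvalues $1/\nu$.

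First I would note that $\sfA$ is symmetric positive definite (it is the Gram matrix of the $\Hdiv$ inner product $(\bfsigma,\bftau)+(\div\bfsigma,\div\bftau)$ restricted to $\Sigma_h$) and that $\sfC$ is symmetric positive definite (it is the Gram matrix of the $H^1_0$ seminorm, which is a norm on $U_h\subset H^1_0(\Omega)$ by Poincar\'e). Hence both $\sfA^{-1}$ and $\sfC^{-1}$ exist, and both reduced problems are genuine symmetric eigenvalue problems after the standard congruence transformation (e.g.\ multiplying $\sfA\sfx=\nu\sfB^\top\sfC^{-1}\sfB\sfx$ on the left by $\sfA^{-1/2}$ and substituting $\sfx=\sfA^{-1/2}\tilde\sfx$, which turns it into a symmetric eigenproblem for $\sfA^{-1/2}\sfB^\top\sfC^{-1}\sfB\sfA^{-1/2}$). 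The key algebraic fact I would invoke is the general principle that for any matrices $\sfM,\sfN$ the products $\sfM\sfN$ and $\sfN\sfM$ have the same nonzero eigenvalues with the same multiplicities. Applying this with $\sfM=\sfA^{-1}\sfB^\top$ and $\sfN=\sfC^{-1}\sfB$ gives that $\sfA^{-1}\sfB^\top\sfC^{-1}\sfB$ and $\sfC^{-1}\sfB\sfA^{-1}\sfB^\top$ have identical nonzero spectra, which is exactly the identification of the two families of finite eigenvalues $\nu=\lambda_h+1$, and hence of the $\lambda_h$ themselves.

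To finish, I would exhibit the eigenvector correspondence explicitly so that it is clear the \emph{eigenmodes}, not merely the eigenvalues, coincide: if $\sfx$ solves the first reduced problem with parameter $\nu\ne0$, then setting $\sfy=\sfC^{-1}\sfB\sfx$ (up to the sign dictated by the substitution $\sfy=-\sfC^{-1}\sfB\sfx$ used in deriving~\eqref{eq:matr1}) produces a solution $\sfy$ of the second reduced problem with the same $\nu$, and conversely via $\sfx=\sfA^{-1}\sfB^\top\sfy$; one checks directly that these maps are mutually inverse on the relevant eigenspaces because $\nu\ne0$ guarantees $\sfx$ and $\sfy$ are both nonzero. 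The main obstacle, such as it is, is purely bookkeeping: I must confirm that no spurious nonzero eigenvalue is introduced or lost when passing between the $\sfB$ and $\sfB^\top$ sides, i.e.\ that the nonzero eigenvalues of $\sfM\sfN$ and $\sfN\sfM$ really do match \emph{with multiplicity}. This is standard, but it is the one place where care is needed, since $\rank(\sfB)=\rank(\sfB^\top)$ guarantees the two families have equal size (as already used in Propositions~\ref{pr:eigF1h} and~\ref{pr:eigF1*h}), and the $\sfM\sfN$/$\sfN\sfM$ identity then forces the multiplicities to agree term by term.
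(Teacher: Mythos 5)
Your proposal is correct, but it routes through a different key lemma than the paper. The paper's proof is a one-line elimination on the full block system of~\eqref{eq:F1h}: solving the first block equation for $\sfx$, namely $\sfx=-(\lambda_h+1)\sfA^{-1}\sfB^\top\sfy$, and substituting into the second yields $\sfC\sfy=(\lambda_h+1)\sfB\sfA^{-1}\sfB^\top\sfy$, which is exactly the Schur complement governing the third family of~\eqref{eq:F1*h}; the converse direction and the matching of multiplicities are left implicit there. You instead start from the two reduced problems $\sfA\sfx=(\lambda_h+1)\sfB^\top\sfC^{-1}\sfB\sfx$ and $\sfC\sfy=(\lambda_h+1)\sfB\sfA^{-1}\sfB^\top\sfy$ and invoke the standard fact that $\sfM\sfN$ and $\sfN\sfM$ share the same nonzero eigenvalues with the same multiplicities, applied with $\sfM=\sfA^{-1}\sfB^\top$ and $\sfN=\sfC^{-1}\sfB$, supplemented by explicit eigenvector maps. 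Your route buys more rigor: you justify the invertibility of $\sfA$ and $\sfC$ (symmetric positive definite, the latter by Poincar\'e), you pin down the multiplicity agreement via the $\sfM\sfN$/$\sfN\sfM$ identity together with $\rank(\sfB)=\rank(\sfB^\top)$, and you exhibit a two-way correspondence of eigenvectors, whereas the paper's elimination is shorter and immediately constructive but one-directional as written. One small imprecision in your write-up: the converse map $\sfx=\sfA^{-1}\sfB^\top\sfy$ drops the factor $-(\lambda_h+1)$ coming from the first block equation, so your two maps are mutually inverse only up to the nonzero scalar $(\lambda_h+1)^{-1}$ (and a sign) --- harmless for identifying eigenspaces, but not literally ``mutually inverse''; with the full factors $\sfy=-\sfC^{-1}\sfB\sfx$ and $\sfx=-(\lambda_h+1)\sfA^{-1}\sfB^\top\sfy$ the composition is exactly the identity, since $\sfA^{-1}\sfB^\top\sfC^{-1}\sfB\sfx=(\lambda_h+1)^{-1}\sfx$ on the relevant eigenspace.
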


\begin{proof}
Solving the matrix formulation of~\eqref{eq:F1h} (see~\eqref{eq:F1matrix} and
Proposition~\ref{pr:eigF1h}) for $\sfx$ gives
$\sfx=-\sfA^{-1}(\lambda_h\sfB^\top\sfy+\sfB^\top\sfy)$, yielding
\[
\sfC\sfy=(\lambda_h+1)\sfB\sfA^{-1}\sfB^\top\sfy,
\]
that is, the Schur complement of the matrix formulation of~\eqref{eq:F1*h}
(see the proof of Proposition~\ref{pr:eigF1*h}).
\end{proof}

We conclude this section with another equivalent matrix formulation
of~\eqref{eq:F1h} and~\eqref{eq:F1*h}.

Starting from
\[
\left(
\begin{matrix}
\sfA&\sfB^\top\\
\sfB&\sfC
\end{matrix}
\right)
\left(
\begin{matrix}
\sfx\\\sfy
\end{matrix}
\right)=
\lambda_h\left(
\begin{matrix}
\sfzero&-\sfB^\top\\
\sfzero&\sfzero
\end{matrix}
\right)
\left(
\begin{matrix}
\sfx\\\sfy
\end{matrix}
\right)
\]
we get
\[
\left(
\begin{matrix}
\sfA&\sfzero\\
\sfB&\sfC
\end{matrix}
\right)
\left(
\begin{matrix}
\sfx\\\sfy
\end{matrix}
\right)=
(\lambda_h+1)\left(
\begin{matrix}
\sfzero&-\sfB^\top\\
\sfzero&\sfzero
\end{matrix}
\right)
\left(
\begin{matrix}
\sfx\\\sfy
\end{matrix}
\right)
\]
and
\[
\left(
\begin{matrix}
\sfA&\sfzero\\
(\lambda_h+1)\sfB&(\lambda_h+1)\sfC
\end{matrix}
\right)
\left(
\begin{matrix}
\sfx\\\sfy
\end{matrix}
\right)=
(\lambda_h+1)\left(
\begin{matrix}
\sfzero&-\sfB^\top\\
\sfzero&\sfzero
\end{matrix}
\right)
\left(
\begin{matrix}
\sfx\\\sfy
\end{matrix}
\right)
\]
leading finally to
\[
\left(
\begin{matrix}
\sfA&\sfzero\\
\sfzero&\sfzero
\end{matrix}
\right)
\left(
\begin{matrix}
\sfx\\\sfy
\end{matrix}
\right)=
(\lambda_h+1)\left(
\begin{matrix}
\sfzero&-\sfB^\top\\
-\sfB&-\sfC
\end{matrix}
\right)
\left(
\begin{matrix}
\sfx\\\sfy
\end{matrix}
\right)
\]

\begin{remark}
The analysis presented in this section applies without modifications to the
formulations enriched with the curl. The only change is the definition of
the matrix $\sfA$ which corresponds to the bilinear form
$(\bfsigma,\bftau)+(\div\bfsigma,\div\bftau)+(\curl\bfsigma,\curl\bftau)$.
\end{remark}

\subsection{Approximation of the \llstar formulation}

The discretization of the \llstar formulation~\eqref{eq:LL*} is obtained after
introducing discrete spaces $\Sigma_h\subset\Hdiv$ and $U_h\subset
H^1_0(\Omega)$. The discrete problem is: find $\mu_h\in\RE$ and $p_h\in U_h$,
with $p_h\ne0$, such that for some $\bfchi_h\in\Sigma_h$ it holds
\begin{equation}
\left\{
\aligned
&(\bfchi_h,\bfxi)+(\div\bfchi_h,\div\bfxi)-(\grad
p_h,\bfxi)=0&&\forall\bfxi\in\Sigma_h\\
&-(\bfchi_h,\grad q)+(\grad p_h,\grad q)=\mu_h(p_h,q)&&\forall q\in U_h
\endaligned
\right.
\label{eq:LL*h}
\tag{LL*h}
\end{equation}
As already observed, this problem is symmetric and it can be written in the
following matrix form
\begin{equation}
\left(
\begin{matrix}
\sfA&\sfB^\top\\
\sfB&\sfC
\end{matrix}
\right)
\left(
\begin{matrix}
\sfx\\
\sfy
\end{matrix}
\right)
=\mu_h
\left(
\begin{matrix}
\sfzero&\sfzero\\
\sfzero&\sfM
\end{matrix}
\right)
\left(
\begin{matrix}
\sfx\\
\sfy
\end{matrix}
\right)
\label{eq:LL*matrix}
\end{equation}
after introducing in a natural way the following matrices:
\begin{itemize}
\item $\sfA$ associated with the bilinear form
$(\bfchi,\bfxi)+(\div\bfchi,\div\bfxi)$,
\item $\sfB$ associated with the bilinear form $-(\bfchi,\grad q)$,
\item $\sfC$ associated with the bilinear form $(\grad p,\grad q)$,
\item $\sfM$ associated with the bilinear form $(p,q)$.
\end{itemize}

The Schur complement associated with the \llstar formulation is easily seen to
be equal to
\[
(-\sfB\sfA^{-1}\sfB^\top+\sfC)\sfy=\mu_h\sfM\sfy
\]

The next proposition, whose proof is immediate, characterizes the eigenvalues
of the \llstar formulation.

\begin{proposition}
The generalized eigenvalue problem~\eqref{eq:LL*matrix} has the following two
families of eigensolutions:
\begin{enumerate}
\item $\mu_h=+\infty$ with multiplicity equal to $\dim\Sigma_h$,
\item a number of positive eigenvalues $\mu_h$ equal to $\dim U_h$.
\end{enumerate}
\end{proposition}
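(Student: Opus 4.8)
The plan is to exploit the symmetric positive definite structure of the diagonal blocks together with the degenerate-eigenvalue bookkeeping from the Remark. First I would record that $\sfA$, $\sfC$, and $\sfM$ are all symmetric positive definite: $\sfA$ is the Gram matrix of the $\Hdiv$ graph inner product, $\sfC$ is the Gram matrix of $(\grad\cdot,\grad\cdot)$ on $U_h\subset H^1_0(\Omega)$ (a norm there by Poincar\'e), and $\sfM$ is a mass matrix. The key preliminary fact is that the full block matrix $\left(\begin{smallmatrix}\sfA&\sfB^\top\\\sfB&\sfC\end{smallmatrix}\right)$ is itself symmetric positive definite. Evaluating its quadratic form on a discrete pair $(\bfchi_h,p_h)\in\Sigma_h\times U_h$ gives
\[
\|\bfchi_h-\grad p_h\|^2+\|\div\bfchi_h\|^2,
\]
which vanishes only if $\bfchi_h=\grad p_h$ and $\div\bfchi_h=0$; together these force $\Delta p_h=0$ weakly, and since $p_h\in H^1_0(\Omega)$ this yields $p_h=0$ and hence $\bfchi_h=0$.

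For the first family I would inspect the right-hand side matrix $\left(\begin{smallmatrix}\sfzero&\sfzero\\\sfzero&\sfM\end{smallmatrix}\right)$. Since $\sfM$ is invertible, its kernel is exactly $\{(\sfx,\sfzero):\sfx\in\mathbb{R}^{\dim\Sigma_h}\}$, of dimension $\dim\Sigma_h$. The left-hand block matrix is nonsingular by the previous paragraph, so this kernel contains no nonzero vector annihilated by the left-hand side. By item~(3) of the Remark on degenerate generalized eigenvalue problems, the problem therefore has $\mu_h=+\infty$ with eigenspace equal to this kernel, accounting for the first family with multiplicity $\dim\Sigma_h$.

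For the second family I would pass to the Schur complement already recorded above,
\[
(\sfC-\sfB\sfA^{-1}\sfB^\top)\sfy=\mu_h\sfM\sfy.
\]
Because the full block matrix is symmetric positive definite and $\sfA$ is positive definite, the Schur complement $\sfS:=\sfC-\sfB\sfA^{-1}\sfB^\top$ is symmetric positive definite as well. The reduced problem is thus a symmetric--definite pencil with both $\sfS$ and $\sfM$ positive definite of size $\dim U_h$, which has exactly $\dim U_h$ real eigenvalues, all strictly positive (counted with multiplicity). These constitute the second family, and the two counts sum to the full dimension $\dim\Sigma_h+\dim U_h$.

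The only non-mechanical step is the positive definiteness of the full block matrix; everything else is the symmetric--definite eigenvalue count and the kernel bookkeeping from the Remark. I expect that verifying the quadratic form vanishes only at the trivial pair---using the weak harmonicity of $p_h$ together with the boundary condition $p_h\in H^1_0(\Omega)$---will be the crux, though it is elementary, which is why the author rightly calls the proof immediate.
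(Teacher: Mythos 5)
Your proof is correct and follows exactly the route the paper intends when it calls the proof immediate: the infinite eigenvalues come from the kernel $\{(\sfx,\sfzero)\}$ of the right-hand side matrix via item~(3) of the Remark, and the finite ones from the symmetric positive definite pencil $(\sfC-\sfB\sfA^{-1}\sfB^\top)\sfy=\mu_h\sfM\sfy$ that the paper displays just before the proposition. Your verification that the full block matrix is positive definite --- evaluating the quadratic form as $\|\bfchi_h-\grad p_h\|^2+\|\div\bfchi_h\|^2$ and using integration by parts with $p_h\in H^1_0(\Omega)$ to rule out a nontrivial kernel --- is precisely the elementary detail the paper leaves to the reader.
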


\section{Convergence analysis}

The convergence analysis of the proposed schemes can be performed within the
standard abstract setting presented in~\cite{BaOs} (see also~\cite{acta}).
We first consider the convergence of the eigenmodes (and absence of spurious
modes), then we discuss the rate of convergence.

\subsection{Analysis of the FOSLS formulations}

We start with the analysis of the first formulation that we have considered
in~\eqref{eq:F1}. Thanks to the equivalence shown in
Proposition~\ref{pr:equiv}, the same analysis applies to
formulation~\eqref{eq:F1*} as well.

We introduce a suitable solution operator $\Tf:L^2(\Omega)\to L^2(\Omega)$
associated with the FOSLS formulation presented in~\eqref{eq:F1}. Given $f\in
L^2(\Omega)$ we define $\Tf f\in H^1_0(\Omega)$ as the second component of the
solution of~\eqref{eq:F1source}, so that it solves the following problem for
some $\bfsigma\in\Hdiv$:
\[
\left\{
\aligned
&(\bfsigma,\bftau)+(\div\bfsigma,\div\bftau)-(\grad\Tf
f,\bftau)=-(f,\div\bftau)&&\forall\bftau\in\Hdiv\\
&-(\bfsigma,\grad v)+(\grad\Tf f,\grad v)=0&&\forall v\in H^1_0(\Omega)
\endaligned
\right.
\]

It is easily seen that the operator $\Tf$ is compact (its range is included in
$H^1_0(\Omega)$ which is compact in $L^2(\Omega)$ and self-adjoint (it is the
solution operator associated with the Laplace problem). We enumerate the
reciprocals of its non-vanishing eigenvalues in increasing order so that they
form a sequence tending to $+\infty$
\[
0<\lambda_1\le\lambda_2\le\dots\le\lambda_i\le\cdots
\]
The corresponding eigenfunctions are denoted by $\{u_i\}$,
$i=1,2,\dots,i,\dots$. We consider eigenfunctions normalized in $L^2(\Omega)$
and we repeat the $\lambda_i$'s according to their multiplicities.

Let $\Sigma_h\subset\Hdiv$ and $U_h\subset H^1_0(\Omega)$ be conforming finite
element spaces.
The discrete counterpart of $\Tf$ is the operator $\Tfh:L^2(\Omega)\to
L^2(\Omega)$ defined as follows. Given $f\in L^2(\Omega)$, we define $\Tfh
f\in U_h$ as the second component of the solution of the Galerkin
approximation of~\eqref{eq:F1source}, so that is solves the following problem
for some $\bfsigma_h\in\Sigma_h$:
\[
\left\{
\aligned
&(\bfsigma_h,\bftau)+(\div\bfsigma_h,\div\bftau)-(\grad\Tfh
f,\bftau)=-(f,\div\bftau)&&\forall\bftau\in\Sigma_h\\
&-(\bfsigma_h,\grad v)+(\grad\Tfh f,\grad v)=0&&\forall v\in U_h
\endaligned
\right.
\]
Since $U_h$ is finite dimensional, the operator $\Tfh$ is compact; moreover,
it is self-adjoint (see, for instance, all equivalent matrix characterizations
presented in the previous section).
We denote the reciprocals of its non-vaninshing eigenvalues in analogy to what
we have done for the continuous operator $\Tf$:
\[
0<\lambda_{1,h}\le\lambda_{2,h}\le\dots\le\lambda_{i,h}\le\cdots\le\lambda_{N(h),h},
\]
where $N(h)\le\dim(U_h)$ is the rank of the matrix $\sfB^\top$ in
Proposition~\ref{pr:eigF1h}.
The corresponding eigenfunctions are denoted by $\{u_{i,h}\}$,
$i=1,2,\dots,N(h)$, with the same convention for normalization and multiple
eigenvalues.

We summarize in the following proposition what is needed in order to show the
convergence of the discrete eigenmodes to the continuous ones (see~\cite{BaOs}
and~\cite{acta}).

\begin{proposition}
Let us assume that the operator sequence $\Tfh$ converges in norm to $\Tf$ as
$h$ goes to zero, that is,
\begin{equation}
\|\Tf f-\Tfh f\|_0\le\rho(h)\|f\|_0\quad\forall f\in L^2(\Omega)
\label{eq:cunif}
\end{equation}
with $\rho(h)$ tending to zero as $h$ goes to zero. Let
$\lambda_i=\lambda_{i+1}=\dots=\lambda_{i+m-1}$ be an eigenvalue of multiplicity
$m$ associated with the operator $\Tf$. Then, for $h$ small enough, so that
$N(h)\ge i+m-1$, the $m$ discrete eigenvalues $\lambda_{j,h}$
($j=i,\dots,i+m-1$) associated with the operator $\Tfh$ converge to
$\lambda_i$.
Moreover, the corresponding eigenfunctions converge, that is
\[
\delta(E,E_h)\to 0\quad\text{as $h$ goes to zero},
\]
where $\delta$ denote as usual the gap between Hilbert subspaces, $E$ is the
continuous eigenspace spanned by $\{u_i,\dots,u_{i+m-1}\}$, and $E_h$ is its
discrete counterpart spanned by $\{u_{i,h},\dots,u_{i+m-1,h}\}$.
\label{pr:cunif}
\end{proposition}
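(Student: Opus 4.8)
The plan is to recast the statement as a spectral approximation result for the compact self-adjoint solution operators $\Tf$ and $\Tfh$ acting on $L^2(\Omega)$, and to apply the classical theory of~\cite{BaOs} based on Riesz spectral projections. First I would pass from the eigenvalues $\lambda_i$ of the problem to the eigenvalues $\mu_i=1/\lambda_i$ of $\Tf$: an eigenvalue $\lambda_i$ of multiplicity $m$ corresponds to an eigenvalue $\mu_i$ of $\Tf$ of the same multiplicity which, by compactness of $\Tf$, is isolated in the spectrum $\sigma(\Tf)$ and has a finite-dimensional eigenspace, namely the space $E$ in the statement.

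Next I would fix a circle $\Gamma\subset\CO$ centred at $\mu_i$, small enough that $\mu_i$ is the only point of $\sigma(\Tf)$ enclosed by $\Gamma$, and introduce the Riesz spectral projections
\[
E=\frac{1}{2\pi\mathrm{i}}\oint_\Gamma (z-\Tf)^{-1}\,dz,\qquad
E_h=\frac{1}{2\pi\mathrm{i}}\oint_\Gamma (z-\Tfh)^{-1}\,dz,
\]
whose ranges are, respectively, the continuous eigenspace $E$ associated with $\mu_i$ and the direct sum of the discrete eigenspaces of $\Tfh$ whose eigenvalues are enclosed by $\Gamma$.

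The key step, and the one I expect to be the main obstacle, is to control the perturbed resolvent uniformly along $\Gamma$. Since $\Gamma$ avoids $\sigma(\Tf)$, the quantity $M=\sup_{z\in\Gamma}\|(z-\Tf)^{-1}\|$ is finite; writing
\[
(z-\Tfh)^{-1}=(z-\Tf)^{-1}\bigl[I-(\Tf-\Tfh)(z-\Tf)^{-1}\bigr]^{-1},
\]
the Neumann series on the right converges as soon as $\rho(h)\,M<1$, which holds for $h$ small by the norm-convergence hypothesis~\eqref{eq:cunif}. This shows that for $h$ small one has $\Gamma\subset\CO\setminus\sigma(\Tfh)$, that $E_h$ is well defined, and that $\|(z-\Tfh)^{-1}-(z-\Tf)^{-1}\|\le C\rho(h)$ uniformly in $z\in\Gamma$; integrating over $\Gamma$ then yields $\|E-E_h\|\le C'\rho(h)\to0$.

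Finally I would extract both conclusions from the norm convergence $E_h\to E$. Because two projections close in norm have ranges of equal dimension, $\rank(E_h)=\dim E=m$ for $h$ small (here self-adjointness is convenient, as it rules out nontrivial Jordan structure and makes algebraic and geometric multiplicities coincide); hence exactly $m$ eigenvalues of $\Tfh$, counted with multiplicity, lie inside $\Gamma$, and writing them as $1/\lambda_{j,h}$ and letting the radius of $\Gamma$ shrink shows that these converge to $\lambda_i$. Applying the same projection argument simultaneously to the eigenvalues below and above $\mu_i$ and invoking the monotone ordering identifies the converging eigenvalues as precisely those indexed $j=i,\dots,i+m-1$, provided $h$ is small enough that $N(h)\ge i+m-1$. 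For the eigenfunctions, the gap $\delta(E,E_h)$ between the ranges is controlled by $\|E-E_h\|$, so $\delta(E,E_h)\le C''\rho(h)\to0$, which is the asserted convergence of eigenspaces.
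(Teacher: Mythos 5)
Your proposal is correct and coincides with the paper's approach: the paper states Proposition~\ref{pr:cunif} without proof, presenting it as a summary of the classical spectral approximation theory for norm-convergent sequences of compact self-adjoint operators cited from~\cite{BaOs} and~\cite{acta}, and your Riesz spectral-projection argument is exactly the standard proof underlying that citation. The only blemish is a sign slip in the resolvent identity, which should read $(z-\Tfh)^{-1}=(z-\Tf)^{-1}\bigl[I+(\Tf-\Tfh)(z-\Tf)^{-1}\bigr]^{-1}$, but this does not affect the Neumann-series bound $\rho(h)\,M<1$ or any subsequent step.
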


We recall the standard a priori error estimate for the solution of the source
problem~\eqref{eq:F1}. It follows with standard arguments since the
formulation is coercive that we have
\begin{equation}
\|\bfsigma-\bfsigma_h\|_{\Hdiv}+\|u-u_h\|_{H^1}\le C
\inf_{\substack{\bftau_h\in\Sigma_h\\ v_h\in U_h}}
(\|\bfsigma-\bftau_h\|_{\Hdiv}+\|u-v_h\|_{H^1})
\label{eq:apriori}
\end{equation}

Let us assume that the domain is a Lipschitz polygon/polyhedron, then we know
that if $f$ is in $L^2(\Omega)$ then the solution $u$ belongs to
$H^{1+s}(\Omega)$ for some $s\in(1/2,1]$.

Unfortunately, estimate~\eqref{eq:apriori} is not enough to obtain the uniform
convergence~\eqref{eq:cunif} of $\Tfh$ to $\Tf$.
Take, for instance, standard finite element spaces, so that the best
approximation properties on the right hand side of~\eqref{eq:apriori} read as
follows
\[
\aligned
&\inf_{\bftau_h\in\Sigma_h}\|\bfsigma-\bftau_h\|_{\Hdiv}\le
Ch^s\|\bfsigma\|_{\bfH^{1+s}}\\
&\inf_{v_h\in U_h}\|u-v_h\|_{H^1}\le Ch^s\|u\|_{H^{1+s}}
\endaligned
\]
Clearly, the regularity of $\bfsigma$ is not enough to guarantee a rate of
convergence, since $\div\bfsigma=-f$ cannot be assumed more regular than
$L^2(\Omega)$, whence $\bfsigma$ in general is not in $\bfH^{1+s}$.

The approximation of $\bfsigma$ could be improved when using more natural
discretization of $\Hdiv$, such as the Raviart--Thomas spaces, as
follows:
\[
\inf_{\bftau_h\in\Sigma_h}\|\bfsigma-\bftau_h\|_{\Hdiv}\le
Ch^s(\|\bfsigma\|_{\bfH^s}+\|\div\bfsigma\|_{H^s})
\]
However, also in this case we see that we cannot get a rate of convergence out
of this estimate for the same reason as before.

What we have observed is a well known fact due to the lack of compactness of
the problem we are studying, when considered in terms of both component of the
solution.

On the other hand, the a priori estimate~\eqref{eq:apriori} is a very strong
result, since it involves the error in the $\Hdiv$ norm of $\bfsigma$ and the
error in the $H^1(\Omega)$ norm of $u$ combined together. For the uniform
convergence it is enough to estimate the error in the $L^2(\Omega)$ of the
only component $u$. This can be done by using a standard duality argument and
the corresponding result is stated in the next lemma.

\begin{lemma}
Let $u\in H^{1+s}(\Omega)$ ($s>1/2$) be the second component of the solution
to~\eqref{eq:F1source} and $u_h\in U_h$ the corresponding numerical solution.
Assume that the finite element spaces $\Sigma_h$ and $U_h$ satisfy the
following approximation properties
\[
\aligned
&\inf_{\bftau\in\Sigma_h}\|\bfchi-\bftau\|_{\Hdiv}\le
Ch^s\|\bfchi\|_{\bfH^s(\Omega)}+\|\div\bfchi\|_{H^{1+s}(\Omega)}\\
&\inf_{v\in U_h}\|p-v\|_{H^1(\Omega)}\le Ch^s\|p\|_{H^{1+s}(\Omega)}
\endaligned
\]
Then the following estimate holds true
\[
\|u-u_h\|_{L^2(\Omega)}\le
Ch^s(\|\bfsigma-\bfsigma_h\|_{\Hdiv}+\|u-u_h\|_{H^1})
\]
\label{le:precunif}
\end{lemma}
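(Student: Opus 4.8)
The plan is to run an Aubin--Nitsche duality argument, built on the structural observation that the problem adjoint to the (symmetric) FOSLS formulation \eqref{eq:F1source} is exactly the \llstar source problem \eqref{eq:LL*source}. Writing both equations of \eqref{eq:F1source} together, the solution $(\bfsigma,u)\in\Hdiv\times H^1_0(\Omega)$ satisfies $a((\bfsigma,u),(\bftau,v))=-(f,\div\bftau)$ for all test pairs, where
\[
a((\bfsigma,u),(\bftau,v))=(\bfsigma,\bftau)+(\div\bfsigma,\div\bftau)-(\grad u,\bftau)-(\bfsigma,\grad v)+(\grad u,\grad v).
\]
One checks directly that $a$ is symmetric and continuous on $(\Hdiv\times H^1_0(\Omega))^2$, and that the Galerkin solution $(\bfsigma_h,u_h)$ satisfies the same identity against discrete test pairs, so that the Galerkin orthogonality $a((\bfsigma-\bfsigma_h,u-u_h),(\bftau_h,v_h))=0$ holds for every $(\bftau_h,v_h)\in\Sigma_h\times U_h$.

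First I would set $e=u-u_h$ and, using the symmetry and coercivity of $a$, introduce the dual solution $(\bfchi,p)\in\Hdiv\times H^1_0(\Omega)$ as the solution of the \llstar problem \eqref{eq:LL*source} with right-hand side $e$, namely $a((\bfchi,p),(\bftau,v))=(e,v)$ for all $(\bftau,v)$; testing separately against $(\bftau,0)$ and $(0,v)$ confirms that this is precisely \eqref{eq:LL*source} with datum $e$. Testing with the full error $(\bftau,v)=(\bfsigma-\bfsigma_h,u-u_h)$ gives $\|e\|_0^2=(e,u-u_h)=a((\bfsigma-\bfsigma_h,u-u_h),(\bfchi,p))$, and subtracting any discrete pair $(\bftau_h,v_h)$ by Galerkin orthogonality yields
\[
\|e\|_0^2=a\big((\bfsigma-\bfsigma_h,u-u_h),(\bfchi-\bftau_h,p-v_h)\big).
\]
Continuity of $a$ then bounds the right-hand side by $C(\|\bfsigma-\bfsigma_h\|_{\Hdiv}+\|u-u_h\|_{H^1})$ times the best approximation error of $(\bfchi,p)$ in $\Sigma_h\times U_h$.

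The crucial step, and the one I expect to be the main obstacle, is establishing the regularity of the dual solution together with the bound of its norms by $\|e\|_0$. Applying the relations \eqref{eq:relations} to the \llstar problem with datum $e$, I would introduce the auxiliary function $w$ solving $-\Delta w=e$, so that $-\Delta p=e-w$, $\bfchi=\grad(p-w)$, and $\div\bfchi=w$. Elliptic regularity on the Lipschitz polygon/polyhedron gives $w\in H^{1+s}(\Omega)$ with $\|w\|_{H^{1+s}}\le C\|e\|_0$, hence $e-w\in L^2(\Omega)$ and $p\in H^{1+s}(\Omega)$ with $\|p\|_{H^{1+s}}\le C\|e\|_0$. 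Consequently $\bfchi=\grad(p-w)\in\bfH^s(\Omega)$ and $\div\bfchi=w\in H^{1+s}(\Omega)$, both controlled by $\|e\|_0$. The point worth emphasizing is that the divergence of the dual flux inherits the \emph{higher} regularity of $w$; this is exactly what makes the hypothesis on the flux approximation in $\Sigma_h$ applicable, and it is the reason the lemma's assumptions are phrased in the \llstar variables $(\bfchi,p)$.

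Finally I would insert these regularity bounds into the assumed approximation properties to obtain $\inf_{\bftau_h\in\Sigma_h}\|\bfchi-\bftau_h\|_{\Hdiv}+\inf_{v_h\in U_h}\|p-v_h\|_{H^1}\le Ch^s\|e\|_0$. Combined with the continuity estimate above this gives $\|e\|_0^2\le Ch^s(\|\bfsigma-\bfsigma_h\|_{\Hdiv}+\|u-u_h\|_{H^1})\,\|e\|_0$, and dividing by $\|e\|_0$ (the case $e=0$ being trivial) produces the claimed bound, with all constants independent of $h$.
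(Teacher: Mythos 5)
Your proposal is correct and takes essentially the same route as the paper's own proof: the identical \llstar dual problem with datum $e=u-u_h$ (the paper's \eqref{eq:dualABF}), the same Galerkin orthogonality plus continuity bound (you merely package the two equations into one symmetric bilinear form $a$, where the paper sums the two equations with test functions $\bfxi=\bfsigma-\bfsigma_h$, $q=u-u_h$), and the same regularity decomposition through an auxiliary Poisson solve, your $w$ being the paper's $g$ in \eqref{eq:miracle} up to sign. Note only that the signs you inherited from \eqref{eq:relations} are off by a sign relative to the self-consistent ones in \eqref{eq:miracle} (a direct check of the first equation of \eqref{eq:LL*source} gives $\div\bfchi=-w$ and $-\Delta p=e+w$), which is immaterial for the norm bounds and the final estimate.
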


\begin{proof}
This proof has been essentially already presented in~\cite[Sec.~7]{vecquad} in
a different context for convex domains (see also~\cite{CaiKu}).

We aim at providing a refined $L^2$ estimate of the error $\|u-u_h\|$ of the
formulation~\eqref{eq:F1source} and of its corresponding
discretization (with appropriate choice of the finite element spaces).
The error will be estimated in terms of the natural error
$\|\bfsigma-\bfsigma_h\|_{\Hdiv}+\|u-u_h\|_{H^1}$.

We consider the following dual problem (which is pretty much related to the
formulation~\eqref{eq:LL*source}): find $\bfchi\in\Hdiv$ and
$p\in H^1_0(\Omega)$ such that
\begin{equation}
\left\{
\aligned
&(\bfchi,\bfxi)+(\div\bfchi,\div\bfxi)-(\grad
p,\bfxi)=0&&\forall\bfxi\in\Hdiv\\
&-(\bfchi,\grad q)+(\grad p,\grad q)=(u-u_h,q)&&\forall q\in H^1_0(\Omega)
\endaligned
\right.
\label{eq:dualABF}
\end{equation}

If the domain is convex (or in general if the domain is smooth enough so that
the Poisson problem has $H^2$ regularity), the solution of the above problem
satisfies
\begin{equation}
\aligned
&\bfchi=\grad(p+g)&&\text{with $g\in H^2(\Omega)\cap H^1_0(\Omega)$ s.t.}\\
&\Delta g=u-u_h\\
&\Delta p=g-u+u_h
\endaligned
\label{eq:miracle}
\end{equation}
so that, in particular, $\div\bfchi=g$; moreover, the following stability
bound is valid:
\[
\|p\|_{H^2}+\|\bfchi\|_{H^1}+\|\div\bfchi\|_{H^2}\le C\|u-u_h\|_{L^2}
\]

In the case of the regularity assumed in our case ($s>1/2$) we have
that~\eqref{eq:miracle} is valid in variational form with $g\in
H^{1+s}(\Omega)\cap H^1_0(\Omega)$ and we obtain the following bound:
\begin{equation}
\|p\|_{H^{1+s}}+\|\bfchi\|_{H^s}+\|\div\bfchi\|_{H^{1+s}}\le C\|u-u_h\|_{L^2}
\label{eq:bound_opt}
\end{equation}

Taking as test functions in~\eqref{eq:dualABF} $\bfxi=\bfsigma-\bfsigma_h$ and
$q=u-u_h$ in~\eqref{eq:dualABF}, summing the two equations, and using the
error equations related to~\eqref{eq:F1source} and its discretization, we
obtain
\[
\aligned
\|u-u_h\|^2_{L^2}={}&(\bfchi,\bfsigma-\bfsigma_h)
+(\div\bfchi,\div(\bfsigma-\bfsigma_h))
-(\grad p,\bfsigma-\bfsigma_h)\\
&-(\bfchi,\grad(u-u_h))+(\grad p,\grad(u-u_h))\\
={}&(\bfchi-\bftau_h,\bfsigma-\bfsigma_h)
+(\div(\bfchi-\bftau_h),\div(\bfsigma-\bfsigma_h))\\
&-(\grad(p-v_h),\bfsigma-\bfsigma_h)\\
&-(\bfchi-\bftau_h,\grad(u-u_h))+(\grad(p-v_h),\grad(u-u_h))
\endaligned
\]
for all $\bftau_h\in\Sigma_h$ and $v_h\in U_h$.

It follows
\[
\|u-u_h\|^2_{L^2}\le C(\|\bfchi-\bftau_h\|_{\Hdiv}+\|p-v_h\|_{H^1})
(\|\bfsigma-\bfsigma_h\|_{\Hdiv}+\|u-u_h\|_{H^1})
\]

Using the approximation estimates assumed for $\Sigma_h$ and $U_h$ and the
bound in~\eqref{eq:bound_opt} we finally get
\[
\|u-u_h\|_{L^2}\le Ch^s(\|\bfsigma-\bfsigma_h\|_{\Hdiv}+\|u-u_h\|_{H^1})
\]
\end{proof}

The results of the previous lemma gives directly the uniform convergence that
implies the convergence of the eigenvalues according to
Proposition~\ref{pr:cunif}.

\begin{theorem}
Under the same hypotheses as in Lemma~\ref{le:precunif} the uniform
convergence~\eqref{eq:cunif} holds true.
\end{theorem}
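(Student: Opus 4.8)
The plan is to combine Lemma~\ref{le:precunif} with the a priori estimate~\eqref{eq:apriori} to obtain a bound on $\|u-u_h\|_{L^2}$ that is bootstrapped into the operator norm estimate~\eqref{eq:cunif}. The starting point is the observation that for any $f\in L^2(\Omega)$, by definition $\Tf f$ is the second component $u$ of the solution to~\eqref{eq:F1source}, while $\Tfh f$ is the second component $u_h$ of its Galerkin approximation. Thus $\|\Tf f-\Tfh f\|_0=\|u-u_h\|_{L^2}$, and the entire task reduces to producing a bound of the form $\|u-u_h\|_{L^2}\le\rho(h)\|f\|_0$ with $\rho(h)\to0$.

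First I would apply Lemma~\ref{le:precunif} to get
\[
\|u-u_h\|_{L^2(\Omega)}\le
Ch^s(\|\bfsigma-\bfsigma_h\|_{\Hdiv}+\|u-u_h\|_{H^1}).
\]
Next I would bound the natural error on the right-hand side. By the coercivity estimate~\eqref{eq:apriori} together with the approximation properties assumed for $\Sigma_h$ and $U_h$, the quantity $\|\bfsigma-\bfsigma_h\|_{\Hdiv}+\|u-u_h\|_{H^1}$ is controlled by the best approximation error; since $\Tf f=u\in H^{1+s}(\Omega)$ and $\bfsigma$ satisfies $\div\bfsigma=-f\in L^2(\Omega)$, the Raviart--Thomas-type estimate gives
\[
\|\bfsigma-\bfsigma_h\|_{\Hdiv}+\|u-u_h\|_{H^1}\le
C(\|\bfsigma\|_{\bfH^s}+\|\div\bfsigma\|_{H^s}+\|u\|_{H^{1+s}}).
\]
Crucially, the elliptic regularity of the Laplace problem yields the stability bound $\|\bfsigma\|_{\bfH^s}+\|u\|_{H^{1+s}}\le C\|f\|_0$, and $\|\div\bfsigma\|_{H^s}=\|f\|_{H^s}$ must be controlled by $\|f\|_0$ --- here one uses that $\Sigma_h$ is a Raviart--Thomas space so that $\div\bfsigma$ need only be approximated in the $\Hdiv$ norm, where $\|\div\bfsigma-\div\bftau_h\|_0$ is handled by the commuting projection without demanding extra regularity of $f$. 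Combining these two displays produces
\[
\|u-u_h\|_{L^2(\Omega)}\le Ch^s\|f\|_0,
\]
so that~\eqref{eq:cunif} holds with $\rho(h)=Ch^s\to0$.

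The main obstacle I anticipate is the careful treatment of the flux error $\|\bfsigma-\bfsigma_h\|_{\Hdiv}$ in the second step. Because $\div\bfsigma=-f$ is only in $L^2(\Omega)$ and not more regular, a naive application of best-approximation estimates in $\bfH^{1+s}$ fails --- this is precisely the lack-of-compactness difficulty emphasized in the discussion preceding the lemma. The resolution is that the Raviart--Thomas interpolant commutes with the divergence, so the divergence part of the $\Hdiv$ error is bounded by the $L^2$ best approximation of $f$ alone, which is $O(1)$ in $h$ but uniformly bounded by $\|f\|_0$; the $s$-th order decay then comes entirely from the $\bfH^s$ regularity of $\bfsigma$ itself and the $H^{1+s}$ regularity of $u$, both of which are controlled by $\|f\|_0$ via elliptic regularity. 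Once this is handled correctly the remaining steps are routine, and the theorem follows immediately since uniform convergence is exactly the hypothesis required by Proposition~\ref{pr:cunif} to guarantee convergence of the discrete eigenmodes.
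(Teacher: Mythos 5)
Your proposal is correct and takes essentially the same route as the paper: the paper's proof is precisely the two-step chain $\|\Tf f-\Tfh f\|_{L^2}=\|u-u_h\|_{L^2}\le Ch^s(\|\bfsigma-\bfsigma_h\|_{\Hdiv}+\|u-u_h\|_{H^1})\le Ch^s\|f\|_{L^2}$, i.e.\ Lemma~\ref{le:precunif} followed by a uniform bound of the natural error by the data norm, which is exactly your plan. Your justification of that last bound via the commuting Raviart--Thomas projection is sound (and your intermediate display involving $\|\div\bfsigma\|_{H^s}$, which is not finite for general $f\in L^2(\Omega)$, is repaired by your subsequent commuting-projection remark), although it is more machinery than needed: the uniform bound follows already from quasi-optimality~\eqref{eq:apriori} with $\bftau_h=\mathbf{0}$, $v_h=0$ and stability of the continuous problem, without invoking elliptic regularity or interpolation estimates.
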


\begin{proof}
We have
\[
\aligned
\|\Tf f-\Tfh f\|_{L^2}={}&\|u-u_h\|_{L^2}
\le Ch^s (\|\bfsigma-\bfsigma_h\|_{\Hdiv}+\|u-u_h\|_{H^1})\\
&\le Ch^s \|f\|_{L^2}
\endaligned
\]
\end{proof}

Let us now move to the analysis of the rate of convergence.

We start with the estimate of the eigenfunctions. Standard Babu\v ska--Osborn
theory (see~\cite{BaOs} or~\cite[Th.~9.10]{acta}) implies the following
result.
\begin{proposition}
Let $\lambda_i=\lambda_{i+1}=\dots=\lambda_{i+m-1}$ be an eigenvalue of
multiplicity $m$ and denote by $E=\Span\{u_i,\dots,u_{i+m-1}\}$ the
corresponding eigenspace. Then
\begin{equation}
\delta(E,E_h)\le C\|(\Tf-\Tfh)|_E\|_{\mathcal{L}(H^1)},
\label{eq:efunest}
\end{equation}
where $E_h=\Span\{u_{i,h},\dots,u_{i+m-1,h}\}$ is the space generated by the
corresponding discrete eigenfunctions.
\end{proposition}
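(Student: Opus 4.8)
The plan is to reproduce the mechanism behind the abstract Babu\v ska--Osborn estimate, namely the comparison of the spectral (Riesz) projections of $\Tf$ and $\Tfh$ associated with the cluster of eigenvalues near the one of interest. Writing $\mu=1/\lambda_i$ for the eigenvalue of $\Tf$ corresponding to $\lambda_i$, by hypothesis $\mu$ has multiplicity $m$ and $E$ is its eigenspace. First I would fix a circle $\Gamma$ in the complex plane, centred at $\mu$ and of radius small enough that $\mu$ is the only eigenvalue of $\Tf$ it encloses while $\Gamma$ itself lies in the resolvent set of $\Tf$. I then introduce the two spectral projections
\[
P=\frac{1}{2\pi i}\oint_\Gamma(z-\Tf)^{-1}\,dz,\qquad
P_h=\frac{1}{2\pi i}\oint_\Gamma(z-\Tfh)^{-1}\,dz,
\]
the first of which has range exactly $E$.

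Next I would use the uniform convergence~\eqref{eq:cunif} just established to guarantee, for $h$ small enough, that $\Gamma$ stays in the resolvent set of $\Tfh$, that the discrete eigenvalues cluster around $\mu$ with total multiplicity $m$, and hence that $P_h$ has range exactly $E_h$ with $\dim E_h=m$. The same uniform convergence yields the crucial uniform resolvent bound $\sup_{z\in\Gamma}\|(z-\Tfh)^{-1}\|\le C$, independent of $h$.

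The core estimate then comes from the resolvent identity
\[
(z-\Tf)^{-1}-(z-\Tfh)^{-1}=(z-\Tfh)^{-1}(\Tf-\Tfh)(z-\Tf)^{-1}.
\]
Applied to $u\in E$, the rightmost factor collapses to a scalar, since $(z-\Tf)^{-1}u=(z-\mu)^{-1}u$, so that
\[
(P-P_h)u=\frac{1}{2\pi i}\oint_\Gamma(z-\mu)^{-1}(z-\Tfh)^{-1}(\Tf-\Tfh)u\,dz.
\]
Estimating the integrand with the uniform resolvent bound and the fixed length of $\Gamma$ gives $\|(P-P_h)|_E\|\le C\|(\Tf-\Tfh)|_E\|$; the essential point is that only the action of $\Tf-\Tfh$ on the finite-dimensional space $E$ remains, which is exactly the quantity in~\eqref{eq:efunest}. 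Finally, since $Pu=u$ for $u\in E$ and $P_hu\in E_h$, every unit $u\in E$ satisfies $\operatorname{dist}(u,E_h)\le\|u-P_hu\|=\|(P-P_h)u\|$, whence $\delta(E,E_h)\le\|(P-P_h)|_E\|$ and the claim follows.

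I expect the main obstacle to be purely a matter of topology: $\Tf$ and $\Tfh$ are compact and self-adjoint in $L^2(\Omega)$, whereas both the gap and the operator difference in~\eqref{eq:efunest} are measured in $\mathcal{L}(H^1)$. The delicate step is therefore to justify that the contour calculus and, above all, the uniform resolvent bound persist in the $H^1$ norm. Because the ranges of $\Tf$ and $\Tfh$ are contained in $H^1_0(\Omega)$, the spectral projections are finite-rank, and $E$, $E_h$ are finite-dimensional, this reduces to transferring the already available $L^2$ uniform convergence to the $H^1$ setting on these finite-dimensional spaces, which is the technical heart of the argument.
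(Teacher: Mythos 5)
Your contour-integral argument via Riesz spectral projections is precisely the mechanism behind the abstract Babu\v ska--Osborn result that the paper invokes here: the paper offers no proof of its own for this proposition, citing \cite{BaOs} and \cite[Th.~9.10]{acta} instead, so in substance you are reconstructing the cited standard proof rather than diverging from it. The structure of your argument (isolating circle $\Gamma$, uniform resolvent bound, resolvent identity collapsing $(z-\Tf)^{-1}u$ to $(z-\mu)^{-1}u$ on $E$, then $\operatorname{dist}(u,E_h)\le\|(P-P_h)u\|$) is correct as far as it goes, and you rightly note that with $\dim E_h=\dim E=m$ the one-sided bound controls the gap.

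However, the step you defer to the end is not merely technical bookkeeping, and your proposed reduction of it is inaccurate. You claim the $H^1$ issue ``reduces to transferring the already available $L^2$ uniform convergence to the $H^1$ setting on these finite-dimensional spaces.'' It does not: in the identity
\[
(P-P_h)u=\frac{1}{2\pi i}\oint_\Gamma(z-\mu)^{-1}(z-\Tfh)^{-1}(\Tf-\Tfh)u\,dz,
\]
the resolvent $(z-\Tfh)^{-1}$ acts on the vector $(\Tf-\Tfh)u$, which has no reason to lie in $E$ or $E_h$; so you need $\sup_{z\in\Gamma}\|(z-\Tfh)^{-1}\|_{\mathcal{L}(H^1)}\le C$ as a bound on all of $H^1_0(\Omega)$, and the standard Neumann-series perturbation that produces it requires norm convergence $\|\Tf-\Tfh\|_{\mathcal{L}(H^1)}\to0$ on the whole space --- strictly stronger than \eqref{eq:cunif}, which lives in $\mathcal{L}(L^2)$. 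Fortunately this stronger convergence does hold in the present setting, and more cheaply than the $L^2$ estimate: for $f\in H^1_0(\Omega)$ the flux satisfies $\div\bfsigma=-f\in H^1(\Omega)$, so the obstruction discussed in the paper (that $\div\bfsigma$ is no better than $L^2$) disappears, and the coercivity estimate \eqref{eq:apriori} together with the approximation properties assumed in Lemma~\ref{le:precunif} gives directly
\[
\|\Tf f-\Tfh f\|_{H^1}\le Ch^s\bigl(\|\bfsigma\|_{\bfH^s}+\|\div\bfsigma\|_{H^s}+\|\Tf f\|_{H^{1+s}}\bigr)\le Ch^s\|f\|_{H^1},
\]
with no duality argument needed. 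Once you state and use this $\mathcal{L}(H^1)$ norm convergence explicitly (also to justify that $\Gamma$ lies in the $H^1$ resolvent set of $\Tfh$ and that $\rank P_h=m$ for small $h$), your argument closes and matches the cited theory; as written, it leaves the load-bearing step open and mischaracterizes what remains to be proved.
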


In order to bound the right hand side in~\eqref{eq:efunest} we can use the
standard energy norm estimate for~\eqref{eq:F1source} which reads
\[
\|\bfsigma-\bfsigma_h\|_{\Hdiv}+\|u-u_h\|_{H^1}\le
\inf_{\substack{\bftau\in\Sigma_h\\ v\in U_h}}
(\|\bfsigma-\bftau\|_{\Hdiv}+\|u-v\|_{H^1})
\]

The final estimate is summarized in the following theorem.

\begin{theorem}
Let $\lambda_i=\lambda_{i+1}=\dots=\lambda_{i+m-1}$ be an eigenvalue of
multiplicity $m$; denote by $E=\Span\{u_{i},\dots,u_{i+m-1}\}$ its eigenspace
and by $E_h=\Span\{u_{i,h},\dots,u_{i+m-1,h}\}$ the space generated by the
corresponding discrete eigenfunctions. Then for all $j=i,\dots,i+m-1$ there
exists $u_h\in E_h$ such that
\begin{equation}
\|u_j-u_h\|_{H^1}\le
C\sup_{\substack{u\in E\\\|u\|=1}}\inf_{\substack{\bftau\in\Sigma_h\\ v\in U_h}}
(\|\grad u-\bftau\|_{\Hdiv}+\|u-v\|_{H^1})
\label{eq:efunest2}
\end{equation}
\label{th:f1}
\end{theorem}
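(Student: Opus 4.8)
The plan is to chain the abstract eigenfunction estimate~\eqref{eq:efunest} with the a~priori energy bound~\eqref{eq:apriori} for the source problem~\eqref{eq:F1source}, using the fact that the elements of $E$ are eigenfunctions in order to turn the resulting operator norm into the best-approximation quantity appearing on the right-hand side of~\eqref{eq:efunest2}. First I would observe that, by the very definition of the gap between subspaces, $\delta(E,E_h)$ controls the distance of any element of $E$ from $E_h$: for each $u_j$ there exists $u_h\in E_h$ with $\|u_j-u_h\|_{H^1}\le C\,\delta(E,E_h)$, the constant accounting for the fact that the eigenfunctions are normalized in $L^2(\Omega)$ while here the $H^1$ norm is used (on the finite-dimensional space $E$ the two norms are equivalent, since $\|\grad u\|_0^2=\lambda_i\|u\|_0^2$). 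Hence it suffices to estimate $\|(\Tf-\Tfh)|_E\|_{\mathcal{L}(H^1)}$.

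Next I would exploit that each $u\in E$ satisfies $\Tf u=\lambda_i^{-1}u$. Consequently the pair solving~\eqref{eq:F1source} with datum $u$ is $(\bfsigma,w)=(\lambda_i^{-1}\grad u,\lambda_i^{-1}u)$, and $\Tfh u=w_h$ is its Galerkin approximation, with discrete flux $\bfsigma_h$. Applying~\eqref{eq:apriori} to this source problem gives
\[
\|(\Tf-\Tfh)u\|_{H^1}\le\|w-w_h\|_{H^1}+\|\bfsigma-\bfsigma_h\|_{\Hdiv}
\le C\inf_{\substack{\bftau\in\Sigma_h\\ v\in U_h}}\bigl(\|\lambda_i^{-1}\grad u-\bftau\|_{\Hdiv}+\|\lambda_i^{-1}u-v\|_{H^1}\bigr).
\]
Since $\Sigma_h$ and $U_h$ are linear spaces, I can rescale the test functions $\bftau\mapsto\lambda_i^{-1}\bftau$, $v\mapsto\lambda_i^{-1}v$ and pull the factor out of the infimum, obtaining
\[
\|(\Tf-\Tfh)u\|_{H^1}\le\frac{C}{\lambda_i}\inf_{\substack{\bftau\in\Sigma_h\\ v\in U_h}}\bigl(\|\grad u-\bftau\|_{\Hdiv}+\|u-v\|_{H^1}\bigr).
\]

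Taking the supremum over $u\in E$ with $\|u\|=1$ then bounds $\|(\Tf-\Tfh)|_E\|_{\mathcal{L}(H^1)}$ by the right-hand side of~\eqref{eq:efunest2}, with $\lambda_i^{-1}$ and the norm-equivalence constants absorbed into $C$; combining this with~\eqref{eq:efunest} and the gap estimate of the first step yields the claim. The only step requiring genuine care is the identification of the flux of $\Tf u$ as $\lambda_i^{-1}\grad u$ together with the scaling argument, since this is precisely what converts the abstract operator-norm bound~\eqref{eq:efunest} into the concrete best-approximation error of the eigenfunction and of its gradient; the remaining manipulations are direct applications of the two estimates already established.
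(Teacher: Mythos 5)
Your proof is correct and takes essentially the same route the paper intends: it chains the Babu\v ska--Osborn bound~\eqref{eq:efunest} with the coercive energy estimate~\eqref{eq:apriori}, using that $\Tf u=\lambda_i^{-1}u$ on $E$ so that the source solution with datum $u$ is the scaled eigenpair $(\lambda_i^{-1}\grad u,\lambda_i^{-1}u)$, after which the factor $\lambda_i^{-1}$ and the $L^2$--$H^1$ norm equivalence on $E$ are absorbed into $C$. The paper leaves these identifications and the scaling argument implicit, and you have filled them in correctly.
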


Once we have the optimal estimate for the eigenfunctions, it is
straightforward to obtain the analogous optimal estimate for the eigenvalues.
In this case, since we have seen that our formulation is symmetric (see for
instance the Schur complement formulation~\eqref{eq:matr1}), we obtain as
usual double order of convergence.

\begin{theorem}
Let $\lambda_i=\lambda_{i+1}=\dots=\lambda_{i+m-1}$ be an eigenvalue of
multiplicity $m$ and denote by $\epsilon_{\lambda}(h)$ the quantity appearing
on the right hand side of estimate~\eqref{eq:efunest2}. Then
\[
|\lambda-\lambda_j|\le C\epsilon_{\lambda}(h)^2\qquad\forall j=i,\dots,i+m-1
\]
\label{th:f2}
\end{theorem}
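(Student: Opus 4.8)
The plan is to exploit the symmetry of the underlying formulation to upgrade the first-order eigenfunction estimate from Theorem~\ref{th:f1} into a double-order estimate for the eigenvalues, following the classical Babu\v ska--Osborn argument for self-adjoint compact operators. Since we have already observed that the solution operator $\Tf$ is self-adjoint (equivalently, that the generalized problem admits the symmetric Schur complement form~\eqref{eq:matr1}), the abstract theory provides a variational characterization of the eigenvalue error entirely in terms of the eigenfunction approximation, with the square of the consistency error appearing because of symmetry.

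First I would recall the standard identity for self-adjoint operators: for an eigenpair $(\lambda_j, u_j)$ of $\Tf$ with $\|u_j\|=1$ and the corresponding discrete quantities, the eigenvalue error can be written (see~\cite{BaOs} or~\cite[Th.~9.10]{acta}) as
\[
|\lambda - \lambda_j| \le C\, \|(\Tf - \Tfh)|_E\|_{\mathcal{L}(H^1)}^2
\]
together with lower-order terms controlled by $\delta(E,E_h)$ that are themselves of the same quadratic order. Here the key point, valid precisely because the operator is self-adjoint, is that the defect between the bilinear form evaluated on the true and approximate eigenfunctions is governed by the \emph{product} of two first-order errors rather than by a single first-order error. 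I would then identify the operator norm $\|(\Tf-\Tfh)|_E\|_{\mathcal{L}(H^1)}$ appearing on the right with the quantity $\epsilon_\lambda(h)$ defined as the right-hand side of~\eqref{eq:efunest2}, since that is exactly the bound established in Theorem~\ref{th:f1} for the restriction of the error operator to the eigenspace $E$.

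Combining these two observations yields directly
\[
|\lambda - \lambda_j| \le C\, \epsilon_\lambda(h)^2 \qquad \forall j = i,\dots,i+m-1,
\]
which is the assertion. The bulk of the work is already contained in Theorem~\ref{th:f1} and in the self-adjointness established earlier; the present theorem is essentially a one-line consequence once the abstract symmetric estimate is invoked.

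The main obstacle I anticipate is ensuring that the symmetric Babu\v ska--Osborn machinery applies cleanly to \emph{this} operator rather than to a formally self-adjoint surrogate. The subtlety is that $\Tf$ is self-adjoint as an operator on $L^2(\Omega)$, yet the eigenfunction estimate~\eqref{eq:efunest2} is phrased in the $H^1$ norm and involves the flux error $\|\grad u - \bftau\|_{\Hdiv}$ as well; one must check that the quadratic eigenvalue estimate of the abstract theory, which is typically stated with the operator norm on the relevant Hilbert space, indeed reproduces the same $\epsilon_\lambda(h)$ and that no cross term of merely first order survives. Since $\Tf$ coincides with the Laplace solution operator and is genuinely symmetric, the standard theory applies without the correction terms that would arise for a non-self-adjoint problem, so this obstacle is resolved by appealing to the symmetry already recorded for~\eqref{eq:matr1}; the double order of convergence is then the expected outcome.
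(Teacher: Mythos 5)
Your proposal is correct and follows essentially the same route as the paper, which proves the theorem only by the remark preceding it: the symmetry of the formulation (recorded in the Schur complement form~\eqref{eq:matr1}, equivalently the self-adjointness of $\Tf$ and $\Tfh$) combined with the eigenfunction estimate of Theorem~\ref{th:f1} yields the usual doubled order via standard Babu\v ska--Osborn theory. Your additional care about matching the $\mathcal{L}(H^1)$ operator norm in~\eqref{eq:efunest} with the quantity $\epsilon_\lambda(h)$ from~\eqref{eq:efunest2} is a detail the paper leaves implicit, and you resolve it exactly as intended.
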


\begin{remark}
One of the most commonly used scheme used for the approximation
of~\eqref{eq:F1source}, based on Ravart--Thomas spaces, is $RT_{k-1}-P_k$
($k\ge1$). In this case the rate of convergence predicted
by~\eqref{eq:efunest2} is $O(h^k)$ provided $u$ belongs to $H^{k+1}(\Omega)$.
In particular, for the lowest order choice, $u\in H^2(\Omega)$ implies first
order convergence $O(h)$ for the eigenfunctions and second order convergence
$O(h^2)$ for the eigenvalues.
\end{remark}

\begin{remark}
If standard (nodal) finite elements are used for the definition of $\Sigma_h$,
then the approximation properties assumed in Lemma~\ref{le:precunif} are not
valid anymore. It is not clear in this case if the uniform
convergence~\eqref{eq:cunif} is satisfied and if the eigenmodes are well
approximated. We are going to present some numerical experiments in
Section~\ref{se:num} where it is shown that the method seems to work in simple
cases.
\end{remark}

\subsection{Analysis of the \llstar formulation}

The analysis of the convergence for the \llstar formulation can be performed
in a similar way as for the FOSLS formulation. We consider the solution
operator $\Tl$ associated with the \llstar formulation: $\Tl f\in
H^1_0(\Omega)$ solves the following problem for some $\bfchi\in\Hdiv$
\[
\left\{
\aligned
&(\bfchi,\bfxi)+(\div\bfchi,\div\bfxi)-(\grad
\Tl f,\bfxi)=0&&\forall\bfxi\in\Hdiv\\
&-(\bfchi,\grad q)+(\grad\Tl f,\grad q)=(f,q)&&\forall q\in H^1_0(\Omega)
\endaligned
\right.
\]

The corresponding discrete operator $\Tlh$ is defined by $\Tlh f\in U_h$ that
solves the following problem for some $\bfchi_h\in\Sigma_h$
\[
\left\{
\aligned
&(\bfchi_h,\bfxi)+(\div\bfchi_h,\div\bfxi)-(\grad
\Tlh f,\bfxi)=0&&\forall\bfxi\in\Sigma_h\\
&-(\bfchi_h,\grad q)+(\grad\Tlh f,\grad q)=(f,q)&&\forall q\in U_h
\endaligned
\right.
\]

As for the FOSLS formulation the uniform convergence of $\Tlh$ to $\Tl$ is
related to an $L^2(\Omega)$ estimate for the \llstar formulation that can be
derived by using a duality argument which makes use of the following auxiliary
problem: find $\tilde\bfchi\in\Hdiv$ and $\tilde p\in H^1_0(\Omega)$ such that
\[
\left\{
\aligned
&(\tilde\bfchi,\bfxi)+(\div\tilde\bfchi,\div\bfxi)-(\grad
\tilde p,\bfxi)=0&&\forall\bfxi\in\Hdiv\\
&-(\tilde\bfchi,\grad q)+(\grad\tilde p,\grad q)=(\Tl f-\Tlh f,q)&&
\forall q\in H^1_0(\Omega)
\endaligned
\right.
\]
Then the following theorem can be proved as in Lemma~\ref{le:precunif}.
\begin{theorem}
Let us assume the same regularity for the solution of our problem as in
Lemma~\ref{le:precunif}. Then the following uniform convergence holds true
\[
\|\Tl f-\Tlh f\|_{L^2(\Omega)}\le\rho(h)\|f\|_{L^(\Omega)}
\]
where $\rho(h)$ tends to zero as $h$ goes to zero.
\end{theorem}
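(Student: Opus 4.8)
The plan is to mirror the duality argument developed in Lemma~\ref{le:precunif}, now applied to the \llstar solution operator $\Tl$ rather than to $\Tf$. The goal is the $L^2(\Omega)$ estimate
\[
\|\Tl f-\Tlh f\|_{L^2}\le Ch^s(\|\bfchi-\bfchi_h\|_{\Hdiv}+\|p-p_h\|_{H^1}),
\]
after which the energy estimate for the \llstar source problem and the bound $\|\bfchi-\bfchi_h\|_{\Hdiv}+\|p-p_h\|_{H^1}\le C\|f\|_{L^2}$ give $\rho(h)=Ch^s\to0$, exactly as in the theorem following Lemma~\ref{le:precunif}. First I would set $p:=\Tl f$ and $p_h:=\Tlh f$, together with the corresponding flux components $\bfchi$ and $\bfchi_h$, and write the error equations obtained by subtracting the discrete \llstar problem from the continuous one, valid for all test functions in $\Sigma_h$ and $U_h$ by Galerkin orthogonality.

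Next I would use the auxiliary problem already written in the excerpt, whose right-hand side is $(\Tl f-\Tlh f,q)$, to represent the squared $L^2$ norm of the error. Choosing the test functions $\bfxi=\bfchi-\bfchi_h$ in the first equation and $q=p-p_h$ in the second equation of the auxiliary problem, then summing, produces $\|\Tl f-\Tlh f\|_{L^2}^2$ on the left. On the right I would insert discrete interpolants $\tilde\bftau_h\in\Sigma_h$ and $\tilde v_h\in U_h$ of the dual solution $(\tilde\bfchi,\tilde p)$, exploiting Galerkin orthogonality of the primal error to subtract them for free, exactly as in the proof of Lemma~\ref{le:precunif}. This yields
\[
\|\Tl f-\Tlh f\|_{L^2}^2\le C(\|\tilde\bfchi-\tilde\bftau_h\|_{\Hdiv}+\|\tilde p-\tilde v_h\|_{H^1})(\|\bfchi-\bfchi_h\|_{\Hdiv}+\|p-p_h\|_{H^1}).
\]

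The remaining ingredients are the approximation properties of $\Sigma_h$ and $U_h$ assumed in Lemma~\ref{le:precunif}, giving the factor $h^s$, together with a regularity/stability bound for the auxiliary dual solution of the form $\|\tilde p\|_{H^{1+s}}+\|\tilde\bfchi\|_{H^s}+\|\div\tilde\bfchi\|_{H^{1+s}}\le C\|\Tl f-\Tlh f\|_{L^2}$. This is the analogue of~\eqref{eq:bound_opt}, and it is the step I expect to require the most care: the auxiliary problem is itself an \llstar-type system, so I would invoke the relations~\eqref{eq:relations} (which characterize $\div\bfchi$ and $\grad(p-\tilde p)$ in terms of solutions of Poisson problems) to transfer the $H^{1+s}$ elliptic regularity of the Dirichlet Laplacian on a Lipschitz polygon/polyhedron to the dual flux and potential. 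The main obstacle is therefore confirming that the extra $\div\bfchi$ regularity needed to match the $\Hdiv$-approximation hypothesis genuinely follows from~\eqref{eq:relations}, rather than any purely technical estimate; once that regularity bound is in hand, dividing through by $\|\Tl f-\Tlh f\|_{L^2}$ and inserting the energy estimate closes the argument precisely as in the FOSLS case.
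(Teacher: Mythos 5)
Your proposal is correct and follows essentially the same route as the paper, which proves this theorem exactly by repeating the duality argument of Lemma~\ref{le:precunif} with the stated auxiliary problem whose datum is $(\Tl f-\Tlh f,q)$, including the transfer of elliptic regularity to the dual solution via the analogue of~\eqref{eq:miracle} and~\eqref{eq:bound_opt}. Your identification of the dual regularity bound (in particular the extra $H^{1+s}$ control of $\div\tilde\bfchi$ needed to match the $\Hdiv$ approximation hypothesis) as the delicate step is precisely where the paper's argument also does the real work.
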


\begin{remark}
Using the previous theorem and the abstract results about the approximation of
eigenvalue problems (see Proposition~\ref{pr:cunif}, and~\cite{BaOs,acta}),
together with the equivalence stated in Proposition~\ref{pr:ll*}, Theorems
analogous to~\ref{th:f1} and to~\ref{th:f2} can be obtained.
\end{remark}

\subsection{Remarks on the formulation enriched with $\curl\bfsigma$}
\label{ss:dauge}

In this section we recall some issues related to the formulations presented in
Subsection~\ref{ss:curl}.

First of all we observe that in this case it is not possible to use
Raviart--Thomas elements for the definition of $\Sigma_h$. Indeed, the
conformity in $\Hdiv$ implies the continuity of the normal trace across
elements (which is compatible with Raviart--Thomas elements), while the
conformity in $\Hcurl$ requires the continuity of the tangential trace. In
practice, if $\Sigma_h$ contains piecewise polynomials, if must be made of
\emph{continuous} elements, so that we have $\Sigma_h\subset\bfH^1(\Omega)$.

A duality argument leading to a refined $\L^2(\Omega)$ estimate for the
div-curl source problem associated with formulation~\eqref{eq:F1curl} was
presented in~\cite{pflaum}. Under certain hypothesis on the domain the
following estimate was shown: there exists $t>1$ such that
\[
\|\bfsigma-\bfsigma_h\|_{\mathbf{L}^2(\Omega)}+\|u-u_h\|_{L^2(\Omega)}\le C
h^{t-1} (\|\bfsigma-\bfsigma_h\|_{\bfH^1(\Omega)}+\|u-u_h\|_{H^1(\Omega)})
\]

On the other hand, in~\cite{costabel} is was shown that the space
$\bfH^1(\Omega)\cap\Hocurl$ is closed in $\Hdiv\cap\Hocurl$. This fact has
negative consequences for the finite element approximation of the solution
of~\eqref{eq:F1curl} and of~\eqref{eq:LL*curl} when $\bfsigma$ does not belong
to $\bfH^1(\Omega)$.
This fact has been observed, in the case of least-squares finite element
methods, in~\cite{Fix-Stephan, Cox-Fix} and later in the case of finite
element approximation of Maxwell's eigenvalues in~\cite{Costabel-Dauge}.

In Section~\ref{se:num} we show an example of bad behavior of the discrete
solution in presence of singularity. We believe that a modification of the
scheme in the spirit of what has been proposed in~\cite{Fix-Stephan, Cox-Fix}
and~\cite{Costabel-Dauge} could lead to good results.

\section{A posteriori analysis}
\label{se:post}

In this section we show how it is possible to define a residual based a
posteriori error estimator and to show its equivalence to the actual error.
For simplicity, we will only discuss the case of the FOSLS
formulation~\eqref{eq:F1} even if analogous constructions can be performed by
the other formulations.

Usually, least-squares finite element formulations come with an intrinsic a
posteriori estimator which is based on the functional used for the definition
of the method. However, in the case of the eigenvalue problem that we
presented, we are computing eigensolutions of the operator associated with
the least-square formulations of the source problem. It follows that the
construction and the analysis of our posteriori error estimator will be
performed in a more conventional way like for standard variational
formuations.

The analysis we are presenting is using arguments that have been already
adopted in the literature for analogous problems. We refer, in particular,
to~\cite{DuranPadraRodriguez} for the approximation of standard Laplace
eigenproblem and to~\cite{Alonso1996,Carstensen1997} for the source Laplace
problem in mixed form. The interested reader is also referred
to~\cite{dietmar} for the Laplace eigenproblem in mixed form.

We consider the following estimator on a single element $T$
\[
\aligned
\eta^2_T&{}=h^2_T\|\div\bfsigma_h-\Delta u_h\|_{L^2(T)}^2+
h^2_T\|\curl\bfsigma\|_{L^2(T)}^2\\
&\quad+\sum_{e\in\partial T}h_e\left(
\|\llbracket\bfsigma\cdot\bft\rrbracket\|_{L^2(e)}^2+
\|\llbracket\grad u_h\cdot\bfn\rrbracket\|_{L^2(e)}^2\right)
\endaligned
\]
which gives as usual the global estimator
\[
\eta_h^2=\sum_T\eta_T^2
\]

The next theorem shows the reliability of the proposed error indicator. For
the sake of readability we state the result in the case of a simple
eigenvalue. More general situations can be handled with standard arguments. We
consider the approximation of~\eqref{eq:F1} where the spaces $\Sigma_h$ and
$U_h$ are one of the standard mixed families (Raviart--Thomas,
Brezzi--Douglas--Marini, etc.) and a standard finite element space of
continuous piecewise polynomials in $H^1_0(\Omega)$, respectively. We do not
impose any condition on the polynomial order of $\Sigma_h$ and $U_h$.
\begin{theorem}[Reliability]
Let $\lambda\in\RE$ be a simple eigenvalue of~\eqref{eq:F1} with eigenfunction
$u\in H^1_0(\Omega)$ and let $\bfsigma\in\Hdiv$ be the other component of the
solution. Consider the approximation $\lambda_h$ of $\lambda$ with
eigenfunction $u_h\in U_h$ converging to $u$ (this can be obtained by
appropriate normalization and choice of the sign) and let
$\bfsigma_h\in\Sigma_h$ be converging analogously to $\bfsigma$.
Then there exists a constant $C$, depending only on the choice of the spaces
$\Sigma_h$ and $U_h$, and on the shape of the elements, such that
\[
\|\bfsigma-\bfsigma_h\|_{\mathbf{L}^2(\Omega)}+\|u-u_h\|_{H^1(\Omega)}\le C
(\eta_h+h\|\div(\bfsigma-\bfsigma_h)\|_{L^2(\Omega)})
\]
\label{le:rel}
\end{theorem}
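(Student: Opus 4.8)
The plan is to establish reliability of the residual estimator $\eta_h$ via a duality argument, closely mirroring the $L^2$ estimate developed in Lemma~\ref{le:precunif} but now measuring the full energy error $\|\bfsigma-\bfsigma_h\|_{\mathbf{L}^2}+\|u-u_h\|_{H^1}$. The starting point is that the eigenpair $(\lambda,u,\bfsigma)$ solves the source problem~\eqref{eq:F1source} with right-hand side $f=\lambda u$, while the discrete eigenpair $(\lambda_h,u_h,\bfsigma_h)$ solves~\eqref{eq:F1h} with right-hand side $\lambda_h u_h$. First I would write the error equations by subtracting the discrete variational identities from the continuous ones, being careful to track the perturbation term coming from the mismatch $\lambda u-\lambda_h u_h$ on the right-hand side; since $\lambda$ is simple and $u_h\to u$, $\lambda_h\to\lambda$ in the appropriate norms, this term is controlled and ultimately absorbed into the higher-order contribution $h\|\div(\bfsigma-\bfsigma_h)\|_{L^2}$ and into $\eta_h$.

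\medskip

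The core of the argument is a standard residual-based analysis for the mixed/least-squares system. I would integrate by parts elementwise on the two error equations, producing interior residual volume terms (the elementwise $\div\bfsigma_h-\Delta u_h$ and $\curl\bfsigma_h$ contributions) and interface jump terms (the tangential jump $\llbracket\bfsigma_h\cdot\bft\rrbracket$ and the normal jump $\llbracket\grad u_h\cdot\bfn\rrbracket$). These are exactly the quantities entering $\eta_T$. The key tool is the Scott--Zhang (or Clément) quasi-interpolation operator into $\Sigma_h$ and $U_h$: by subtracting an interpolant of the continuous error, Galerkin orthogonality removes the discrete part and one is left pairing residuals against the interpolation error, which is then bounded by the usual local interpolation estimates involving powers $h_T$ and $h_e^{1/2}$. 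Summing over elements and edges, applying Cauchy--Schwarz and a trace/scaling inequality, yields a bound of the energy error by $\eta_h$ times the energy error itself, from which reliability follows after dividing.

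\medskip

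The step I expect to be the main obstacle is the treatment of the eigenvalue perturbation $\lambda u - \lambda_h u_h$ and the appearance of the correction term $h\|\div(\bfsigma-\bfsigma_h)\|_{L^2}$ in the final bound. Unlike a pure source problem, here the ``data'' is itself only approximately known, so the residual carries a genuine consistency error. The right way to handle this is to write $\lambda u - \lambda_h u_h = \lambda(u-u_h) + (\lambda-\lambda_h)u_h$ and to use the already-established convergence of eigenvalues (Theorem~\ref{th:f2}, giving double order) and eigenfunctions (Theorem~\ref{th:f1}) so that the first piece is of the same order as the energy error and the second is even higher order. The term $h\|\div(\bfsigma-\bfsigma_h)\|_{L^2}$ arises precisely because the volume residual $\div\bfsigma_h-\Delta u_h$ is multiplied by $h_T$ and related to $\div(\bfsigma-\bfsigma_h)$ through $\div\bfsigma=-\lambda u$; since $\div\bfsigma$ is only $L^2$ this term cannot be absorbed into $\eta_h$ directly and must be retained explicitly on the right-hand side, which explains its presence in the statement.

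\medskip

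Finally I would verify that the constant $C$ depends only on the mesh shape-regularity and on the chosen finite element spaces, not on $h$ or on the (fixed, simple) eigenvalue $\lambda$. This follows because all the interpolation and trace inequalities used are scale-invariant under the shape-regularity assumption, and because the hidden dependence on $\lambda$ and on $\|u\|,\|\bfsigma\|$ enters only through fixed continuous-problem data. The simplicity hypothesis on $\lambda$ guarantees the sign/normalization choices are well defined and that $u_h\to u$ without gap-distance complications, so the perturbation estimates above are unambiguous; the extension to eigenvalues of higher multiplicity, as noted in the statement, proceeds by replacing individual eigenfunctions with the eigenspace gap $\delta(E,E_h)$ and is otherwise identical.
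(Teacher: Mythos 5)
Your second and third paragraphs are on the right track in spirit (elementwise integration by parts, Scott--Zhang interpolation, and a correct explanation of why the term $h\|\div(\bfsigma-\bfsigma_h)\|_{L^2(\Omega)}$ must be retained explicitly), but the proposal has a genuine gap at its center: you never explain how the flux error $\|\bfsigma-\bfsigma_h\|_{\mathbf{L}^2(\Omega)}$ is actually brought under control, and the generic recipe you describe does not work for it. A Scott--Zhang or Cl\'ement quasi-interpolant ``into $\Sigma_h$'' is not available when $\Sigma_h$ is a Raviart--Thomas or Brezzi--Douglas--Marini space (those operators are built for $H^1$-conforming spaces), and testing the first error equation with discrete $\bftau$ gives the full $\Hdiv$ inner product of the error against $\bftau$, not the $\mathbf{L}^2$ norm you need. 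The paper's proof hinges on a step absent from your plan: the Helmholtz decomposition $\bfsigma_h=\grad\alpha+\curl\beta$ with $\alpha\in H^1_0(\Omega)$, so that $\bfsigma-\bfsigma_h=\grad z-\curl\beta$ with $z=u-\alpha$ and the two parts are $L^2$-orthogonal. The gradient part is estimated through the \emph{second} error equation (which has zero right-hand side) and is the source of the $h\|\div(\bfsigma-\bfsigma_h)\|_{L^2(\Omega)}$ term; the solenoidal part is estimated by testing the \emph{first} error equation with $\bftau=\curl\beta^I$, where $\beta^I$ is the Scott--Zhang interpolant of the scalar stream function $\beta$: since $\div\curl\beta^I=0$, both the divergence term and the eigenvalue right-hand side vanish, yielding the key orthogonality $(\curl\beta,\curl\beta^I)=0$, and elementwise integration by parts then produces exactly the $\curl\bfsigma_h$ and tangential-jump contributions in $\eta_T$. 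Note that in the paper only \emph{scalar} functions ($z$, $\beta$, and $u-u_h$) are ever interpolated.

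A secondary but substantive point concerns framing and the eigenvalue perturbation. Your opening announces a duality argument mirroring Lemma~\ref{le:precunif}; the reliability proof is not a duality argument (your own second paragraph in fact abandons duality), and no auxiliary dual problem is used. More importantly, your treatment of $\lambda u-\lambda_h u_h$ via the a priori results of Theorems~\ref{th:f1} and~\ref{th:f2} is both unnecessary and at odds with the stated constant: the theorem claims $C$ depends only on the spaces $\Sigma_h$, $U_h$ and the shape of the elements, whereas invoking a priori convergence rates would smuggle in solution-dependent constants and smallness conditions on $h$. In the paper's argument the eigenvalue residual never needs separate treatment: it is annihilated by the curl-type test function ($\div\curl=0$) in the one place the first error equation is used, and otherwise it sits inside $\div(\bfsigma-\bfsigma_h)=-\lambda u-\div\bfsigma_h$, which is precisely the quantity kept explicitly as $h\|\div(\bfsigma-\bfsigma_h)\|_{L^2(\Omega)}$ on the right-hand side of the estimate.
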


\begin{proof}
Let us start with the estimate of $\|\bfsigma-\bfsigma_h\|_{L^2(\Omega)}$. We
consider the Helmholtz decomposition of $\bfsigma_h$
\[
\bfsigma_h=\grad\alpha+\curl\beta
\]
with $\alpha\in H^1_0(\Omega)$.
Then we have $\bfsigma-\bfsigma_h=\grad z-\curl\beta$ with $z=u-\alpha$ and
\[
\|\bfsigma-\bfsigma_h\|_{L^2(\Omega)}^2=\|\grad z\|_{L^2(\Omega)}^2+
\|\curl\beta\|_{L^2(\Omega)}^2
\]
It is then standard to estimate $\grad z$ as follows
\[
\aligned
\|\grad z\|_{L^2(\Omega)}^2&{}=(\grad z,\bfsigma-\bfsigma_h)
=-(z,\div(\bfsigma-\bfsigma_h)\\
&=-=(z-z^I,\div(\bfsigma-\bfsigma_h)-(\grad(u-u_h),\grad z^I)\\
&\le Ch\|\grad z\|_{L^2(\Omega)}\|\div(\bfsigma-\bfsigma_h)\|_{\Hdiv}+
\|\grad (u-u_h)\|_{L^2(\Omega)}\|\grad z^I\|_{L^2(\Omega)}
\endaligned
\]
where $z^I$ is an approximation of $z$ in $U_h$ and where we used the error
equation associated with our formulation.

The estimate of $\curl\beta$ is performed as usual by considering the
Scott--Zhang interpolant $\beta^I$ of $\beta$; we observe that we have
\[
(\curl\beta,\curl\beta^I)=-(\bfsigma-\bfsigma_h,\curl\beta^I)=0
\]
Indeed choosing $\bftau=\curl\beta^I$ in the following error equation
\[
(\bfsigma-\bfsigma_h,\bftau)+(\div(\bfsigma-\bfsigma_h),\div\bftau)
-(\grad(u-u_h),\bftau)=(\lambda u-\lambda_hu_h,\div\bftau)
\]
gives
\[
(\bfsigma-\bfsigma_h,\curl\beta^I)-(\grad(u-u_h),\curl\beta^I)=
(\bfsigma-\bfsigma_h,\curl\beta^I)=0
\]
Hence we have
\[
\aligned
\|\curl\beta\|_{L^2(\Omega)}^2&{}=(\curl\beta,\curl(\beta-\beta^I))=
-(\bfsigma-\bfsigma_h,\curl\beta)\\
&=\sum_T\left(\int_T\curl(\bfsigma-\bfsigma_h)(\beta-\beta^I)-
\int_{\partial T}(\bfsigma-\bfsigma_h)\cdot\bft(\beta-\beta^I)\right)\\
&\le C\left(\sum_Th_T\|\curl\bfsigma_h\|_{L^2(T)}+
\sum_eh_e^{1/2}\|\llbracket\bfsigma_h\cdot\bft_e\rrbracket\|_{L^2(e)}\right)
\|\curl\beta\|_{L^2(\Omega)}
\endaligned
\]

Let us now move to the estimate of $\grad(u-u_h)$. We observe that from our
error equation we have
\[
(\grad(u-u_h),v_h)=(\bfsigma-\bfsigma_h,\grad v_h)
\]
for all $v\in U_h$. It follows
\[
\|\grad(u-u_h)\|_{L^2(\Omega)}^2=(\grad(u-u_h),\grad((u-u_h)-w))+
(\bfsigma-\bfsigma_h,\grad w)
\]
where $w\in U_h$ is the Scott--Zhang interpolant of $u-u_h$. The second term
in the last expression can be easily be bounded by
$\|\bfsigma-\bfsigma_h\|_{L^2(\Omega)}\|\grad(u-u_h)\|_{L^2(\Omega)}$, so that
we have to estimate the first one. By standard arguments we have
\[
\aligned
(\grad(u-u_h),\grad((u-u_h)-w))&{}=
\sum_T\Big(-\int_T(\div\bfsigma-\div\grad u_h)((u-u_h)-w)\\
&\quad+\int_{\partial T}\grad u_h\cdot\bfn((u-u_h)-w)\Big)
\endaligned
\]
Hence we have
\[
\aligned
\|\grad(u-u_h)\|_{L^2(\Omega)}^2&{}\le
C\Big(\|\bfsigma-\bfsigma_h\|_{L^2(\Omega)}+
\sum_Th_T\|\div\bfsigma_h-\Delta u_h\|_{L^2(\Omega)}\\
&\quad+\sum_eh_e^{1/2}\|\llbracket\grad u_h\cdot\bfn\rrbracket\|_{L^2(e)}+
h\|\div(\bfsigma-\bfsigma_h)\|_{L^2(\Omega)}\Big)\\
&\quad\|\grad(u-u_h)\|_{L^2(\Omega)}
\endaligned
\]
which together with the obtained estimate for
$\|\bfsigma-\bfsigma_h\|_{L^2(\Omega)}$ implies the result.
\end{proof}

The efficiency of the proposed estimator can be shown as it is standard by
local inverse inequalities and the use of suitable bubble functions.
Without giving any additional detail we state the final result.
\begin{theorem}[Efficiency]
We the same hypotheses as for the reliability result, we have that the error
is an upper bound for our estimator, that is
\[
\eta_h\le C\left(
\|\bfsigma-\bfsigma_h\|_{\Hdiv}+\|u-u_h\|_{H^1(\Omega)}\right)
\]
\end{theorem}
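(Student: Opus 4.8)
The plan is to prove the efficiency (lower bound) of each local contribution to $\eta_T^2$ by the standard bubble-function technique, showing that each residual term is controlled from above by the true error measured over a local patch. Since $\eta_h^2 = \sum_T \eta_T^2$ is a sum of four types of terms, I would treat them one at a time and then sum.

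First I would handle the two element-residual terms. For the volume residual $h_T\|\div\bfsigma_h-\Delta u_h\|_{L^2(T)}$, I would multiply the residual $r_T := \div\bfsigma_h - \Delta u_h$ by the interior bubble $b_T r_T$ (where $b_T$ is supported on $T$ and vanishes on $\partial T$), integrate against the error equation, integrate by parts, and use the norm equivalence $\|b_T^{1/2} r_T\|_{L^2(T)} \simeq \|r_T\|_{L^2(T)}$ together with the inverse inequality $\|\grad(b_T r_T)\|_{L^2(T)} \le C h_T^{-1}\|r_T\|_{L^2(T)}$. Recalling that on the continuous level $\div\bfsigma = -\Delta u = \lambda u$, the residual can be written in terms of $\div(\bfsigma-\bfsigma_h)$, $\Delta(u-u_h)$ and the eigenvalue/eigenfunction errors, yielding $h_T\|r_T\|_{L^2(T)} \le C(\|\bfsigma-\bfsigma_h\|_{\Hdiv,T}+\|u-u_h\|_{H^1(T)} + \text{h.o.t.})$. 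The $h_T\|\curl\bfsigma_h\|_{L^2(T)}$ term is even simpler: since $\curl\bfsigma = 0$ at the continuous level, $\curl\bfsigma_h = \curl(\bfsigma_h-\bfsigma)$, and the same bubble argument (using a vector bubble and the inverse inequality for $\curl$) bounds it by $\|\bfsigma-\bfsigma_h\|_{\bfH(\curl),T}$, which is controlled by $\|\bfsigma-\bfsigma_h\|_{\Hdiv}$ only after invoking the interior regularity $\curl\bfsigma=0$, so in fact this term is bounded directly by $h_T\|\curl(\bfsigma-\bfsigma_h)\|_{L^2(T)}$.

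Next I would treat the two edge-jump terms using edge bubbles $b_e$. For $\|\llbracket\bfsigma_h\cdot\bft\rrbracket\|_{L^2(e)}$, I would test with an extension of $b_e$ times the jump into the two elements sharing $e$, integrate by parts, and absorb the resulting volume contributions into the already-estimated element residuals and the tangential-component error; the scaled trace and inverse inequalities give $h_e^{1/2}\|\llbracket\bfsigma_h\cdot\bft\rrbracket\|_{L^2(e)} \le C\|\bfsigma-\bfsigma_h\|_{L^2(\omega_e)}$ plus curl-residual terms on the patch $\omega_e$. Since the continuous $\bfsigma$ has a continuous tangential trace ($\bfsigma=\grad u$ so $\llbracket\bfsigma\cdot\bft\rrbracket=0$), the jump of $\bfsigma_h\cdot\bft$ equals the jump of $(\bfsigma_h-\bfsigma)\cdot\bft$, which is what makes the bound possible. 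The normal-derivative jump $\|\llbracket\grad u_h\cdot\bfn\rrbracket\|_{L^2(e)}$ is handled identically, using that $\grad u$ has continuous normal trace, and is bounded by $\|u-u_h\|_{H^1(\omega_e)}$ plus the volume residual.

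Finally I would sum over all elements and edges and use finite overlap of the patches $\omega_e$ (bounded by the shape-regularity of the mesh) to pass from the local bounds to the global estimate $\eta_h \le C(\|\bfsigma-\bfsigma_h\|_{\Hdiv}+\|u-u_h\|_{H^1(\Omega)})$. The main obstacle I anticipate is bookkeeping the eigenvalue-specific terms: unlike the source problem, the right-hand side involves $\lambda u - \lambda_h u_h$ rather than a fixed datum $f$, so I must verify that the discrepancy $\lambda u - \lambda_h u_h$ is genuinely a higher-order perturbation (controlled by $|\lambda-\lambda_h|\,\|u\|+\lambda_h\|u-u_h\|$, both of which are dominated by the natural error under the convergence already established). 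Absorbing these eigenvalue-error contributions cleanly into the stated upper bound, and confirming that the data-oscillation terms they generate are indeed higher order, is the only genuinely nonstandard point; the bubble-function estimates themselves are entirely routine.
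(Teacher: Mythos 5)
Your proposal is correct and follows exactly the route the paper itself indicates: the paper gives no detailed proof, stating only that efficiency follows ``as it is standard by local inverse inequalities and the use of suitable bubble functions,'' which is precisely your element-bubble/edge-bubble argument with the exact relations $\curl\bfsigma=0$, $\llbracket\bfsigma\cdot\bft\rrbracket=0$, and $\llbracket\grad u\cdot\bfn\rrbracket=0$ used to rewrite each estimator term as an error residual. One simplification you missed: the ``genuinely nonstandard'' eigenvalue bookkeeping you anticipate never actually arises, because every contribution to $\eta_T$ is a residual of the constitutive relation $\bfsigma=\grad u$ alone and not of $\div\bfsigma=-\lambda u$ (in particular $\div\bfsigma_h-\Delta u_h=\div(\bfsigma_h-\bfsigma)-\Delta(u_h-u)$ since $\div\bfsigma=\Delta u$ exactly, independently of $\lambda$), so no $\lambda u-\lambda_h u_h$ data term and no oscillation terms appear, and the clean bound stated in the theorem follows without any higher-order perturbations.
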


\section{Numerical examples}
\label{se:num}

In this section we report some numerical examples that confirm the theoretical
results of this paper. Moreover, we shall show how the a posteriori analysis
developed in Section~\ref{se:post} can be used in the framework of an adaptive
scheme.

\subsection{A priori convergence: FOSLS formulation}

In order to confirm the convergence rates stated in Theorems~\ref{th:f1}
and~\ref{th:f2} we first consider a square domain $\Omega=]0,1[^2$ where the
solution of the Laplace eigenvalue problems is well known. We compare the
solutions computed with a standard finite element formulation (continuous
Lagrangian elements of order one), a standard mixed finite element formulation
(based on lowest order Raviart--Thomas elements) and the FOSLS
formulation~\eqref{eq:F1h}, where we have made three choices for the space
$\Sigma_h$: Raviart--Thomas element, Brezzi--Douglas--Marini element, and
standard Lagrangian element of lowest order; in all cases we use
continuous piecewise linear polynomials for the space $U_h$ in the FOSLS
formulation. It turns out that the results are pretty much comparable and that
also in the case of the FOSLS formulation with Lagrangian elements, which is
not covered by our theory, we obtain reasonable results.

Figure~\ref{fg:apriori} shows various error quantities related to the
approximation of the smallest eigenvalue with the considered numerical
schemes.

\begin{figure}
\includegraphics[height=6cm]{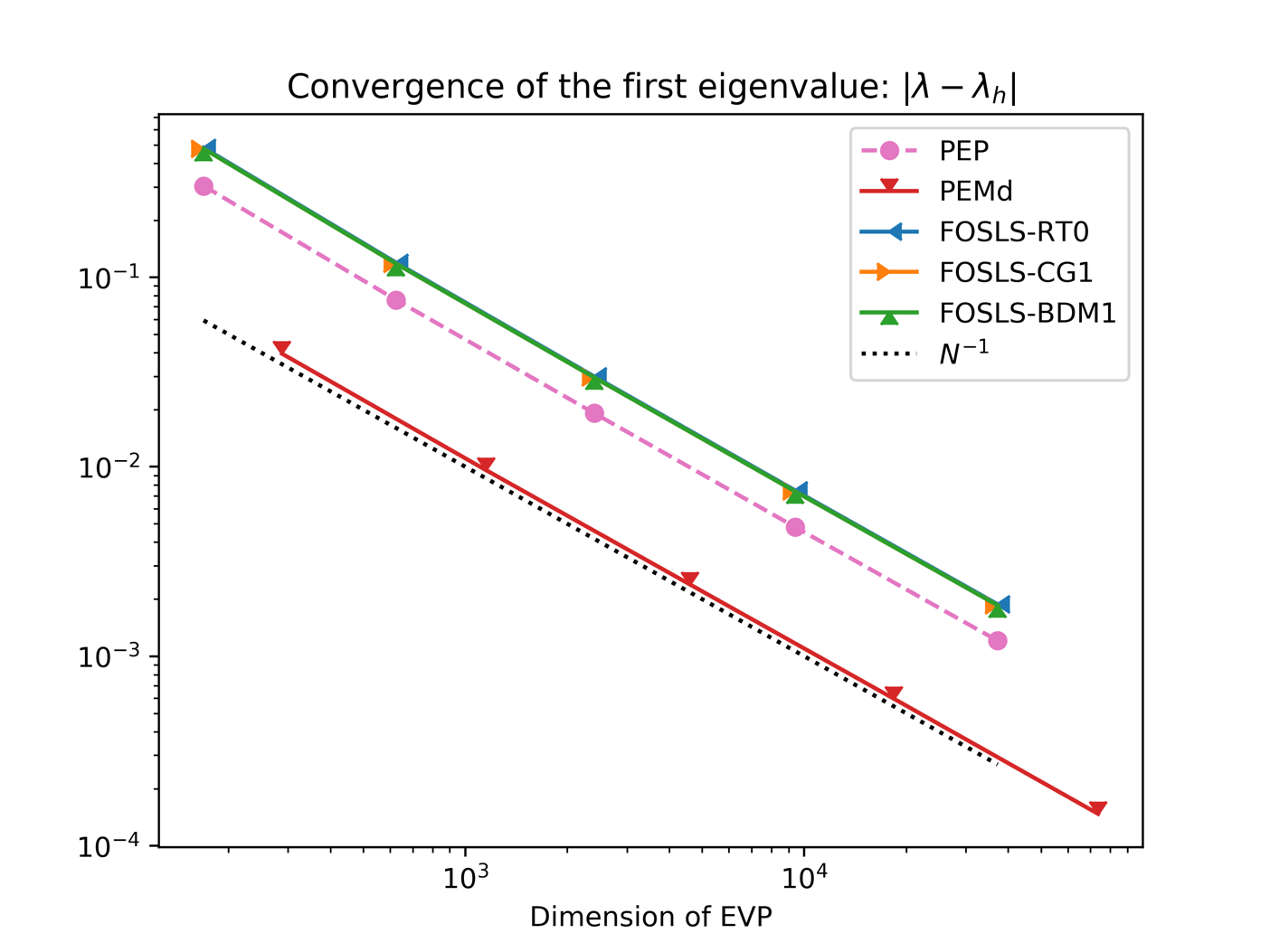}\\
\includegraphics[height=4.5cm]{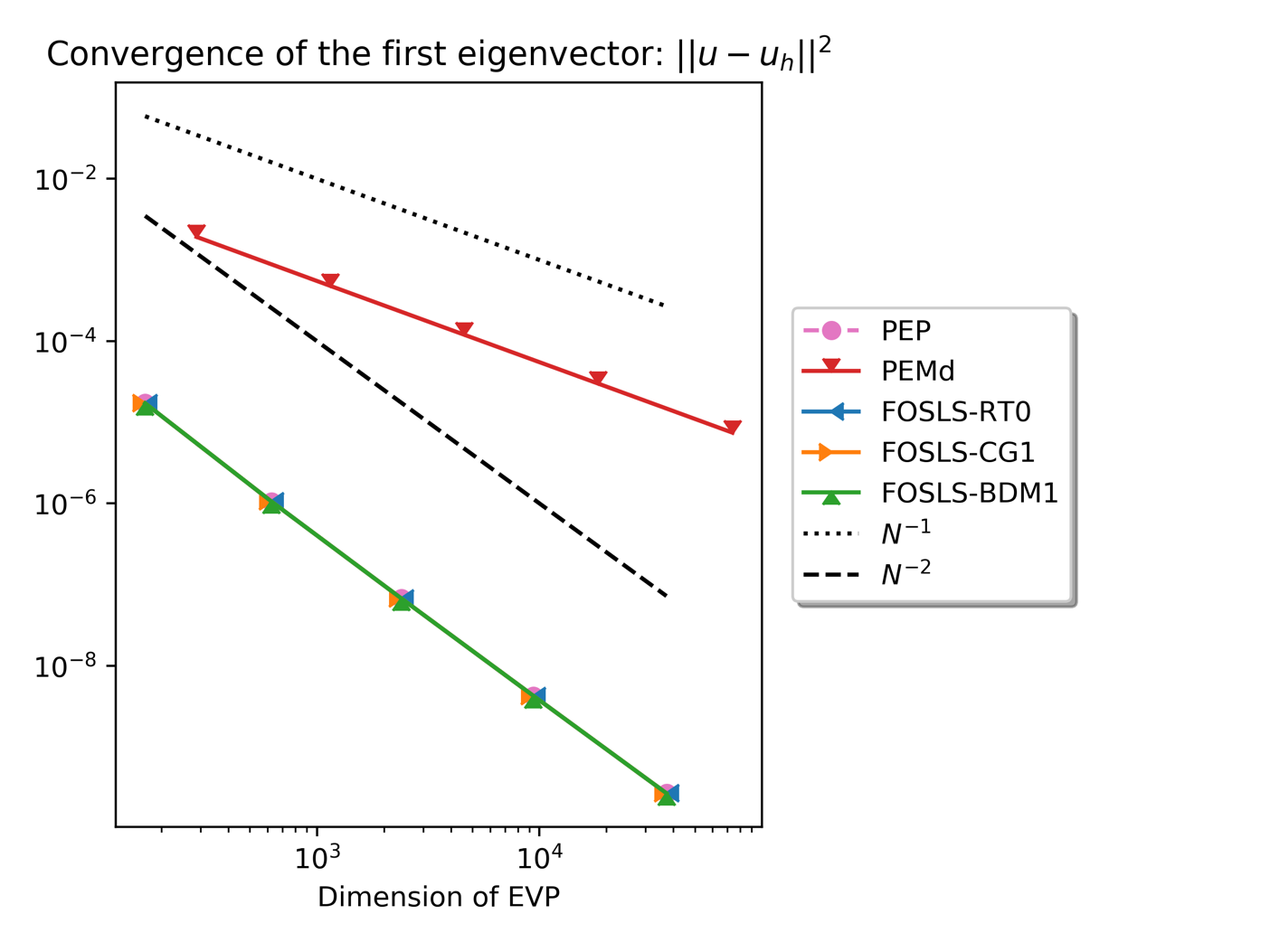}
\includegraphics[height=4.5cm]{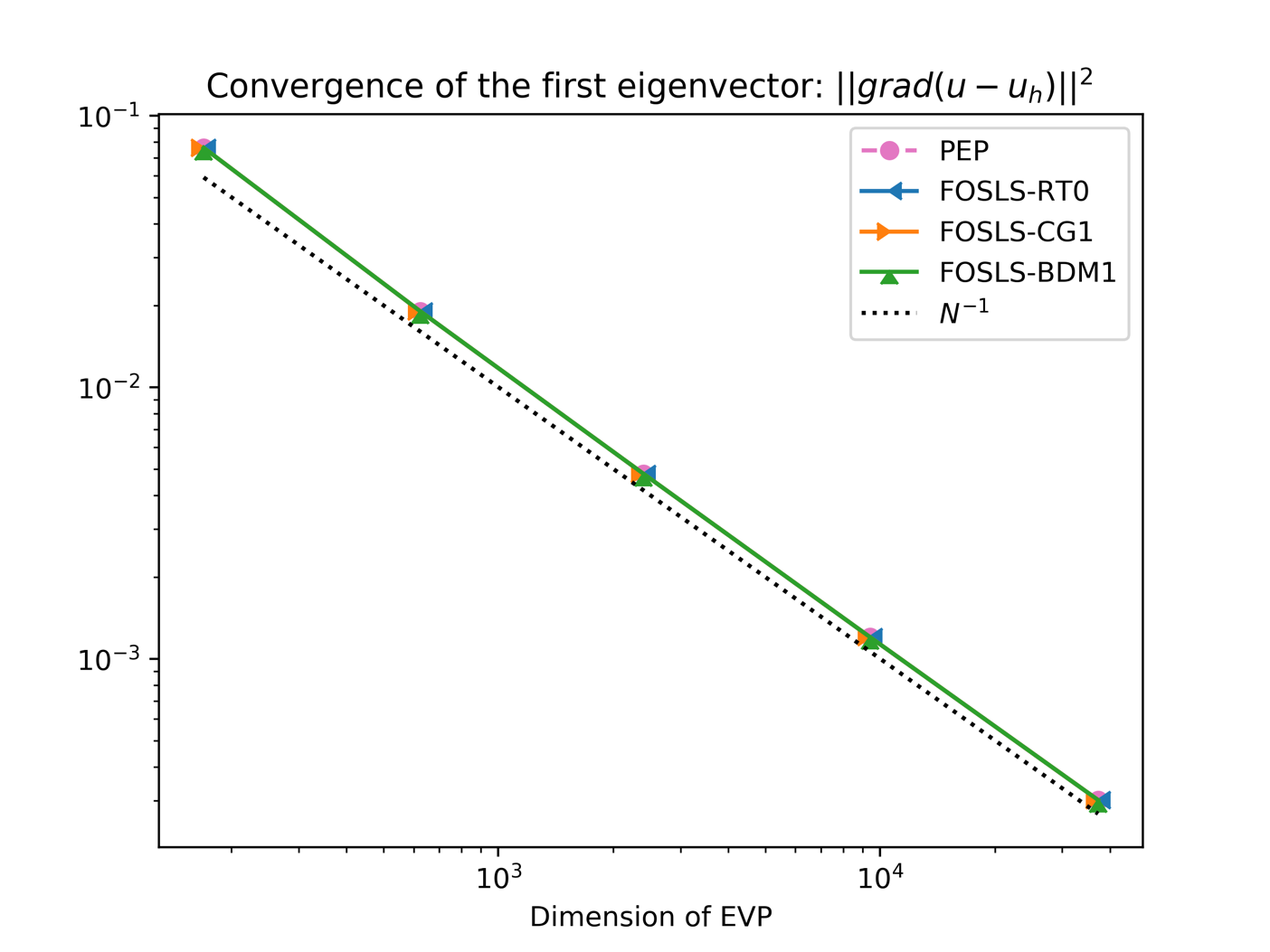}\\
\includegraphics[height=4.5cm]{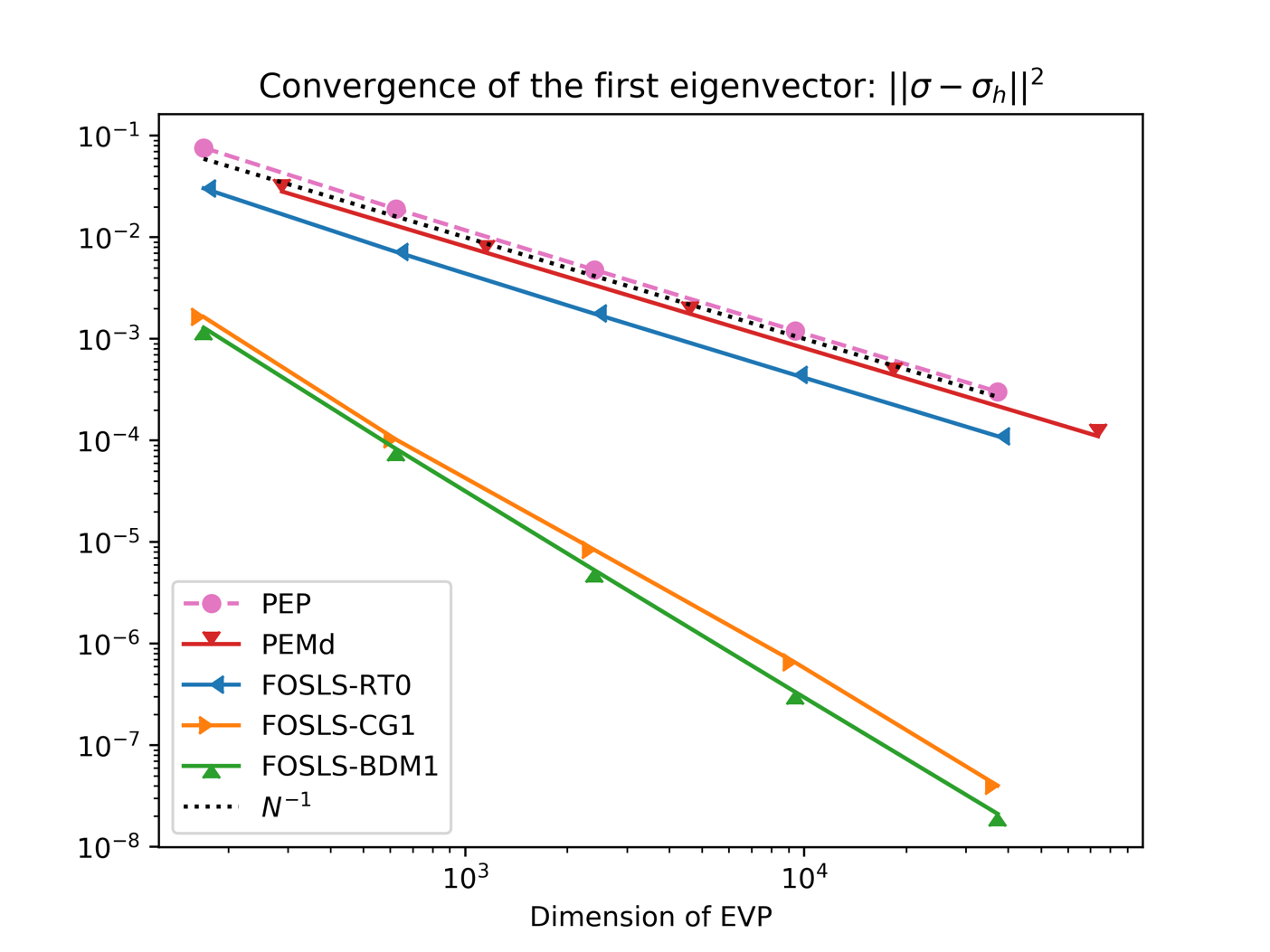}
\includegraphics[height=4.5cm]{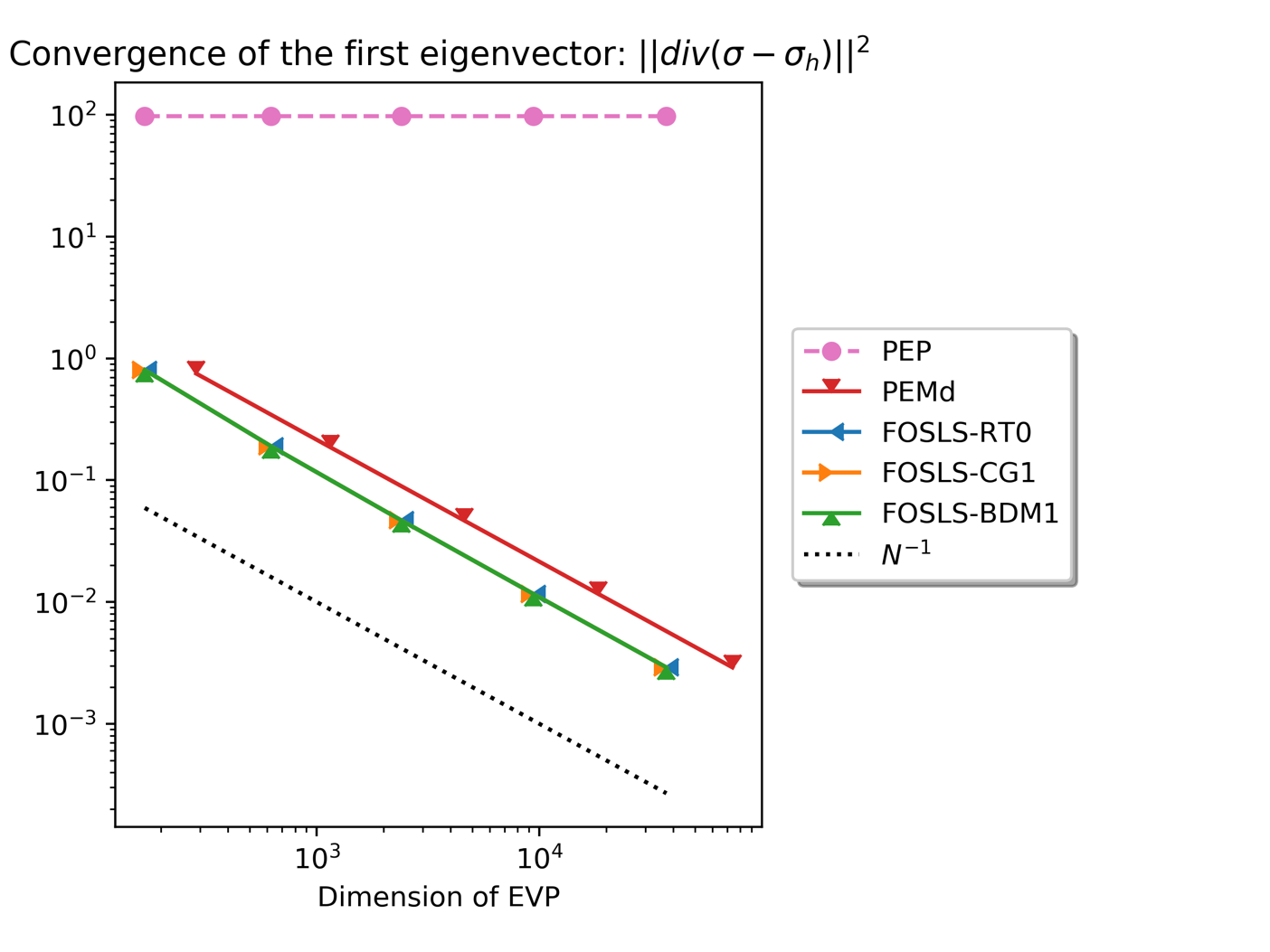}
\caption{Error of the eigenvalue $\lambda$ and error in the $L^2$-norm of $u$,
$\grad u$, $\bfsigma$, and $\div\bfsigma$. The used methods are the standard
Galerkin formulation (PEP), the mixed formulation (PEMd), the FOSLS
formulation with lowest-order Raviart--Thomas (FOSLS-RT0), continuous
Lagrangian (FOSLS-CG1), and Brezzi--Douglas--Marini (FOSLS-BDM1) elements}
\label{fg:apriori}
\end{figure}

\subsection{Formulation enriched with $\curl\bfsigma$}

In Subsection~\ref{ss:curl} we discussed how to enrich the FOSLS formulation
by explicitly imposing that $\curl\bfsigma$ is zero. We observed in
Subsection~\ref{ss:dauge} that the resulting formulation is not expected to
provide good results in presence of solutions where the variable $\bfsigma$
is not sufficiently regular. We computed the eigenvalues of our problem on an
L-shaped domain with continuous piecewise polynomials for both variables. From
the convergence plots, shown in Figure~\ref{fg:Lshaped}, it is clear the first
five eigenvalues have different convergence properties.
In particular, the first (singular) eigenvalue is not converging; it could be
actually shown that it converges optimally towards a wrong value. This is a
similar behavior as what as been previously observed for other formulations
involving $\div$ and $\curl$ of $\bfsigma$ (see, for
instance,~\cite{Fix-Stephan, Cox-Fix, Costabel-Dauge}.
\begin{figure}
\includegraphics[width=8cm]{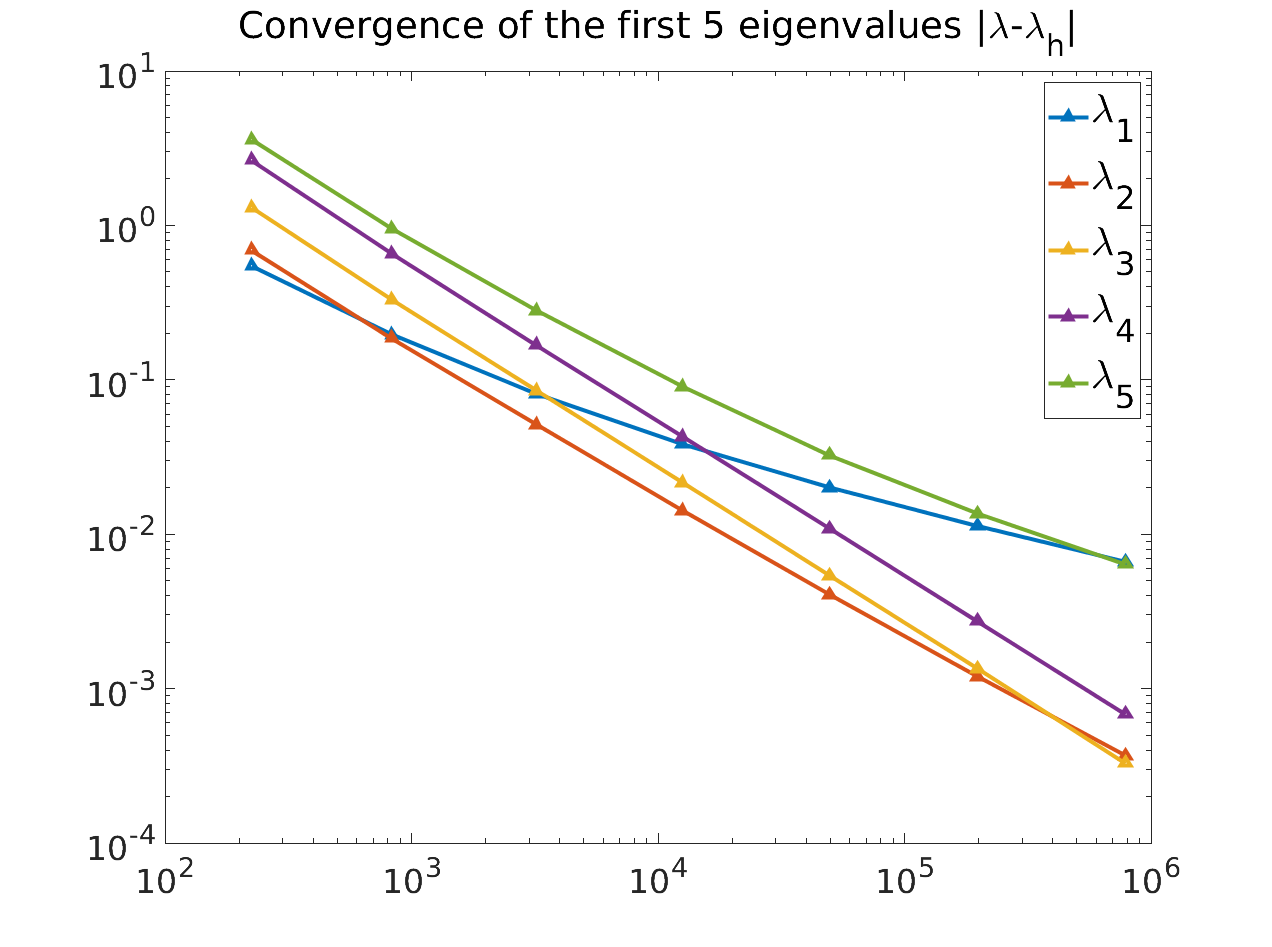}
\caption{Error of the first five eigenvalues computed with the div-curl
formulation on an L-shaped domain}
\label{fg:Lshaped}
\end{figure}

\subsection{A posteriori analysis and adaptive algorithm}

The a posteriori error estimator studied in Section~\ref{se:post} can be
naturally used in order to drive an adaptive scheme within the usual
SOLVE--ESTIMATE--MARK--REFINE cycle, when D\"orfler marking is adopted.
We used the FOSLS formulation with Raviart--Thomas elements in order to
approximate the fundamental mode of the Laplace eigenvalue problems on an
L-shaped domain. Figure~\ref{fg:adaptive} shows the error plots as a function
of the number of degrees of freedom corresponding to different choices of the
D\"orfler bulk parameter $\vartheta$. Uniform refinement corresponds to the
choice $\vartheta=1$. The results show that the choice $\vartheta=0.3$ gives
optimal convergence.
\begin{figure}
\includegraphics[width=9cm]{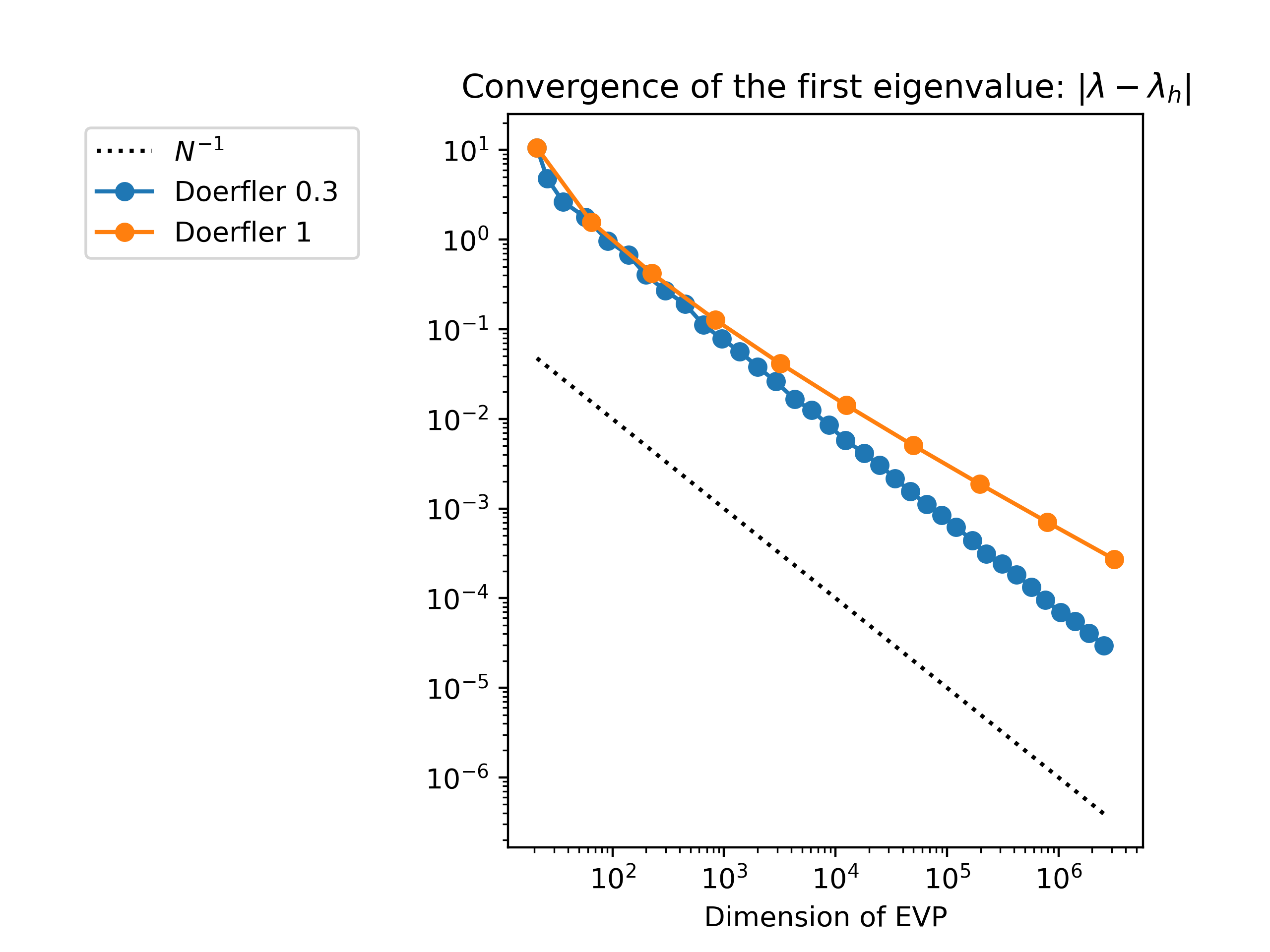}
\caption{Adaptive scheme: convergence of the first eigenvalue depending on
different choices of the D\"orfler bulk parameter}
\label{fg:adaptive}
\end{figure}

\bibliographystyle{amsplain}
\bibliography{ref}

\end{document}